\documentclass[a4paper]{amsart}
\usepackage{amssymb}
\usepackage[mathcal]{euscript}
\usepackage[cmtip,all]{xy}
\usepackage{color}

\usepackage{tikz-cd}


\newcommand{\bydef}{:=}
\newcommand{\defby}{=:}

\newcommand{\id}{\mathrm{id}}



\newcommand{\bi}{\mathbf{i}}


\newcommand{\cA}{\mathcal{A}}
\newcommand{\cB}{\mathcal{B}}




\newcommand{\ZZ}{\mathbb{Z}}

\newcommand{\RR}{\mathbb{R}}
\newcommand{\CC}{\mathbb{C}}

\newcommand{\FF}{\mathbb{F}}
\newcommand{\KK}{\mathbb{K}}
\newcommand{\chr}[1]{\mathrm{char}\,#1}


\DeclareMathOperator{\Hom}{\mathrm{Hom}}
\DeclareMathOperator{\End}{\mathrm{End}}

\DeclareMathOperator{\im}{\mathrm{im}\,}
\DeclareMathOperator{\Aut}{\mathrm{Aut}}
\DeclareMathOperator{\AAut}{\mathbf{Aut}}













\newenvironment{romanenumerate}
 {\begin{enumerate}
 
 }{\end{enumerate}}









\DeclareMathOperator{\Ext}{\mathrm{Ext}} 

\newcommand{\Hc}{\textup{H}}  
\newcommand{\Zc}{\textup{Z}}
\newcommand{\Bc}{\textup{B}}
\newcommand{\dc}{\textup{d}}

\newtheorem{theorem}{Theorem}[section]
\newtheorem{proposition}[theorem]{Proposition}
\newtheorem{lemma}[theorem]{Lemma}
\newtheorem{corollary}[theorem]{Corollary}

\theoremstyle{definition}
\newtheorem{df}[theorem]{Definition}
\newtheorem{example}[theorem]{Example}

\theoremstyle{remark}
\newtheorem{remark}[theorem]{Remark}

\begin{document}

\title{Graded-simple algebras and cocycle twisted loop algebras}

\author{Alberto Elduque}
\address{Departamento de Matem\'{a}ticas
 e Instituto Universitario de Matem\'aticas y Aplicaciones,
 Universidad de Zaragoza, 50009 Zaragoza, Spain}
\email{elduque@unizar.es}
\thanks{Supported by grants MTM2017-83506-C2-1-P (AEI/FEDER, UE) and E22\_17R (Diputaci\'on General de Arag\'on)}

\subjclass[2010]{Primary 16W50, 17B70}

\keywords{Loop algebra, cocycle twist, graded-simple, graded-central}


\begin{abstract}
The loop algebra construction by Allison, Berman, Faulkner, and Pianzola, describes graded-central-simple algebras with split centroid in terms of central simple algebras graded by a quotient of the original grading group. Here the restriction on the centroid is removed, at the expense of allowing some deformations (cocycle twists) of the loop algebras.
\end{abstract}

\maketitle

\section{Introduction}\label{se:intro}

The graded-central-simple algebras (not necessarily associative, nor Lie) with split centroid were shown in \cite{ABFP} to be isomorphic to loop algebras of algebras graded by a quotient group that are central simple as ungraded algebras. This is a very important reduction, as the graded-central-simple algebras may fail to be nice as ungraded algebras, for instance, they may fail to be simple or semisimple.

The purpose of this paper is to remove the restriction of the centroid being split, at the expense of allowing certain deformations of the loop algebra construction. These deformations will be based on a symmetric $2$-cocycle on the grading group with values in the multiplicative group of the ground field. 

Graded-central-simple algebras over the real field have been studied, with a different approach based on Galois descent from $\CC$ to $\RR$, in \cite{BKpr}. Their results are subsumed nicely in our more general description.

\smallskip

All the algebras considered will be defined over a ground field $\FF$, unless otherwise stated, and they are just vector spaces over $\FF$ endowed with a bilinear multiplication, usually denoted by juxtapostion. No assumption on associativity, dimension (which can be infinite), or existence of unity, is made.

For the basic facts on gradings, the reader may consult \cite{EKmon}. Here we will review some basic definition. Let $\cA$ be an algebra and $G$ a group. A \emph{$G$-grading} on $\cA$ is a vector space decomposition $\Gamma:\cA=\bigoplus_{g\in G}\cA_g$ such that $\cA_g\cA_h\subseteq \cA_{gh}$ for any $g,h\in G$. The nonzero elements in $\cA_g$ are said to be \emph{homogeneous of degree $g$}. The \emph{support} of $\Gamma$ is the set $\{g\in G\mid \cA_g\neq 0\}$.

Gradings by abelian groups often arise as eigenspace decompositions with respect to a family of commuting diagonalizable automorphisms. Over an arbitrary field, a $G$-grading $\Gamma$ on $\cA$ is equivalent to a homomorphism of affine group schemes $\eta_\Gamma:G^D\rightarrow \AAut_\FF(\cA)$, where $G^D$ is the diagonalizable group scheme represented by the group algebra $\FF G$. In this paper, only gradings by abelian groups will be considered.

Let $\Gamma: \cA=\bigoplus_{g\in G} \cA_g$ and $\Gamma':\cA'=\bigoplus_{g\in G} \cA'_g$ be two gradings by an abelian group $G$. The $G$-gradings $\Gamma$ and $\Gamma'$ are \emph{isomorphic} if $\cA$ and $\cA'$ are isomorphic as $G$-graded algebras, i.e., if  there exists an isomorphism of algebras $\varphi:\cA\rightarrow\cA'$ such that $\varphi(\cA_g)=\cA'_g$ for all $g\in G$. We will write then $\cA\simeq_G\cA'$  and say that $\cA$ and $\cA'$ are \emph{graded-isomorphic}.

\medskip

The paper is structured as follows. Section \ref{se:ext} will review known results on extensions of abelian groups and their associated $2$-symmetric cocycles needed in the sequel. These symmetric $2$-cocycles $\tau:G\times G\rightarrow \FF^\times$ are used in Section \ref{se:cocycle_twists} to define a new multiplication on any $G$-graded algebra $\cA$. The new algebra thus obtained is denoted by $\cA^\tau$ and it is associative, Lie, ..., if $\cA$ is so. The basic properties of these \emph{cocycle twists} are given. Actually, these cocycle twists are particular instances of the \emph{graded contractions} considered in \cite{MontignyPatera}, \cite{MoodyPatera}. The underlying ideas go back to \cite{Inonu-Wigner}.

Section \ref{se:loop} reviews the general loop algebra construction in \cite{ABFP} and some of its main properties. Section \ref{se:graded-central-simple} contains the main results of the paper, which show that any graded-central-simple algebra is graded-isomorphic to a cocycle twist of a loop algebra of a central simple algebra graded by a quotient of the original grading group (Theorem \ref{th:main}). Actually, the graded-isomorphism class of a $G$-graded-central-simple algebra is determined by a subgroup $H$ of $G$, an element in the symmetric second cohomology group $\Hc^2_\textup{sym}(H,\FF^\times)$, and the equivalence class of a central simple (as ungraded algebra) $G/H$-graded algebra, under a precise equivalence relation weaker than graded-isomorphism (Corollary \ref{co:less_freedom}).

The connection with the approach over the real field by Galois descent in \cite{BKpr} is explained in Section \ref{se:real}.

\bigskip

\section{$\Ext$, extensions, and $\Hc^2_{\textrm{sym}}$}\label{se:ext}

In this section, several well-known results on extensions of abelian groups will be reviewed. The reader may consult \cite[Chapters III and VI]{Hilton-Stammbach}.

Unless otherwise stated, multiplicative notation will be used for abelian groups.

\subsection{$\Ext(A,B)$ and extensions}
Given two abelian groups (i.e., $\ZZ$-modules) $A$, $B$, the abelian group $\Ext_\ZZ(A,B)$, or simply  $\Ext(A,B)$, can, and will, be identified with the set of equivalence classes of extensions  of $A$ by $B$ (in the category of abelian groups, and we will refer to them as \emph{abelian extensions}), where an extension of $A$ by $B$ is a short exact sequence $ 1\rightarrow B\rightarrow E\rightarrow A\rightarrow 1$, and two extensions
\[
1\longrightarrow B\longrightarrow E\longrightarrow A\longrightarrow 1\qquad\text{and}\qquad
1\longrightarrow B\longrightarrow E'\longrightarrow A\longrightarrow 1
\]
are equivalent if there is a homomorphism $\varphi:E\rightarrow E'$, necessarily bijective, such that the diagram
\[
\begin{tikzcd}
1\arrow[r]&B\arrow[r]\arrow[d, equal]&E\arrow[r] \arrow[d, "\varphi"]&A\arrow[r]\arrow[d, equal]&1\\
1\arrow[r]&B\arrow[r]&E'\arrow[r]&A\arrow[r]&1
\end{tikzcd}
\]
is commutative. 

For a homomorphism $f:A'\rightarrow A$, the natural homomorphism
\[
f^*=\Ext(f,B):\Ext(A,B)\longrightarrow \Ext(A',B)
\]
is obtained as follows. Let $\xi:1\rightarrow B\overset{i}\rightarrow E\xrightarrow{p} A\rightarrow 1$ be an abelian extension, and let 
$\tilde E$ be the pull-back of $p$ and $f$:
\[
\tilde E=\{(x,a')\in E\times A'\mid p(x)=f(a')\}.
\]
Then we obtain a commutative diagram with exact rows:
\[
\begin{tikzcd}
\phantom{\xi:\ }1\arrow[r]&B\arrow[r, "j"]\arrow[d, equal]&\tilde E\arrow[r, "\tilde p_2"] \arrow[d, "\tilde p_1"]&A'\arrow[r]\arrow[d, "f"]&1\\
\xi:\ 1\arrow[r]&B\arrow[r, "i"]&E\arrow[r, "p"]&A\arrow[r]&1
\end{tikzcd}
\]
where $\tilde p_1(x,a')=x$ and $\tilde p_2(x,a')=a'$ are the canonical projections, and $j(x)=\bigl(i(x),e\bigr)$ for any $x\in B$ ($e$ denotes the neutral element). The extension of $A'$ by $B$ in the first row is denoted by $\xi f$, and the map
\begin{equation}\label{eq:f*}
\begin{split}
f^*:\Ext(A,B)&\longrightarrow \Ext(A',B)\\
 [\xi]\ &\mapsto\ [\xi f]
\end{split}
\end{equation}
is well defined and equals $\Ext(f,B)$.

\bigskip

\subsection{$\Ext(A,B)\simeq {\normalfont \Hc^2_\textrm{sym}}(A,B)$}
\null\quad On the other hand, the set of equivalence classes of central extensions, in the category of groups, of the  group $A$ by the abelian group $B$, that is, equivalence classes as above, but of short exact sequences of  groups $1\rightarrow B\overset{i}\rightarrow E\xrightarrow{p} A\rightarrow 1$ with $i(B)$ central in $E$, can be identified with the second cohomology group $\Hc^2(A,B)=\Zc^2(A,B)/\Bc^2(A,B)$, where
\begin{multline*}
\Zc^2(A,B)=\{\sigma:A\times A\rightarrow B\mid\\
 \sigma(a_1,a_2)\sigma(a_1a_2,a_3)=\sigma(a_1,a_2a_3)\sigma(a_2,a_3)\ \forall a_1,a_2,a_3\in A\}
\end{multline*}
is the set of \emph{$2$-cocycles}, and
\[
\Bc^2(A,B)=\{ \dc \gamma\mid \gamma:A\rightarrow B\ \text{a map}\}
\]
is the set of $2$-coboundaries, where $\dc \gamma(a_1,a_2)=\gamma(a_1)\gamma(a_2)\gamma(a_1a_2)^{-1}$ for any map $\gamma:A\rightarrow B$ and $a_1,a_2\in A$.

The element in $\Hc^2(A,B)$ that corresponds to the central extension $\xi:1\rightarrow B\overset{i}\rightarrow E\xrightarrow{p} A\rightarrow 1$ is obtained by fixing a section $s:A\rightarrow E$ of $p$ ($s$ is not a homomorphism in general). Then for any $a_1,a_2\in A$, the element $s(a_1)s(a_2)s(a_1a_2)^{-1}$ is in $\ker p=\im i$, so there is a unique element $\sigma(a_1,a_2)\in B$ such that
\begin{equation}\label{eq:isigma}
i\bigl(\sigma(a_1,a_2)\bigr)=s(a_1)s(a_2)s(a_1a_2)^{-1}.
\end{equation}
Then $\sigma:A\times A\rightarrow B$ is a $2$-cocycle whose equivalence class $[\sigma]$ in $\Hc^2(A,B)$ does not depend on the chosen section $s$. This equivalence class $[\sigma]$ is the element in $\Hc^2(A,B)$ that corresponds to the equivalence class of $\xi$.

Moreover, for $A$ and $B$ abelian, $\xi$ is an abelian extension if and only if $\sigma$ is symmetric. 

Denote by $\Zc^2_{\textup{sym}}(A,B)$ the subgroup of $\Zc^2(A,B)$ of the symmetric $2$-cocycles, that is, $2$-cocycles $\sigma$ such that $\sigma(a_1,a_2)=\sigma(a_2,a_1)$ for any $a_1,a_2\in A$, and note that $\Bc^2(A,B)$ is contained in $\Zc^2_\textup{sym}(A,B)$. Then, for $A$ and $B$ abelian, $\Ext(A,B)$ can be identified too with the quotient
\[
\Hc^2_\textup{sym}(A,B)=\Zc^2_\textup{sym}(A,B)/\Bc^2(A,B).
\]

Given an abelian extension $\xi:1\rightarrow B\overset{i}\rightarrow E\xrightarrow{p} A\rightarrow 1$, the $2$-cocycle $\sigma\in\Zc^2_\textup{sym}(A,B)$ defined in \eqref{eq:isigma}, and a homomorphism of abelian group $f:A'\rightarrow A$, the element in $\Hc^2_\textup{sym}(A',B)$ attached to $\xi f$ is just $[\sigma\circ(f\times f)]$. In other words, $f^*$ in \eqref{eq:f*} corresponds to the natural map (also denoted by $f^*$):
\begin{equation}\label{eq:f*H2}
\begin{split}
f^*: \Hc^2_\textup{sym}(A,B)&\longrightarrow\Hc^2_\textup{sym}(A',B)\\
[\sigma]\ &\mapsto\ [\sigma\circ(f\times f)]
\end{split}
\end{equation}

\bigskip

\subsection{The long exact sequence}
Given an abelian group $G$, a subgroup $H$, and the associated quotient group $G/H$, consider the corresponding short exact sequence
\begin{equation}\label{eq:HiotaGpi}
\zeta: 
\begin{tikzcd}
1\arrow[r]&H\arrow[r, hookrightarrow, "\iota"]&G\arrow[r, "\pi"]&G/H\arrow[r]&1
\end{tikzcd}
\end{equation}
where $\iota$ is the inclusion map and $\pi$ the canonical projection. Given any abelian group $F$, there is the associated long exact sequence
\begin{multline}\label{eq:long}
1\rightarrow \Hom(G/H,F)\xrightarrow{\pi^*}\Hom(G,F)\xrightarrow{\iota^*}\Hom(H,F)\\
\xrightarrow{\ \delta\ }\Ext(G/H,F)\xrightarrow{\pi^*}\Ext(G,F)\xrightarrow{\iota^*}\Ext(H,F)\rightarrow 1
\end{multline}
because $\Ext^n(.,.)$ is trivial for $n\geq 2$ (see, e.g., \cite[Lemma 3.3.1]{Weibel}).
The maps $\pi^*$ and $\iota^*$ in the first row are just the precompositions by $\pi$ and $\iota$, respectively. The maps $\pi^*$ and $\iota^*$ in the second row are given by \eqref{eq:f*}. As for the connecting homomorphism $\delta$, it is obtained as follows. Given a homomorphism $f:H\rightarrow F$, let $E$ be the push-out of $f$ and $\iota$:
\[
E=F\times G / \left\langle \bigl(f(x),\iota(x)^{-1}\bigr)\mid x\in H\right\rangle.
\]
There appears a commutative diagram with exact rows:
\[
\begin{tikzcd}
\zeta:\ 1\ar[r] &H\ar[r, hookrightarrow, "\iota"]\ar[d, "f"]&G\ar[r, "\pi"]\ar[d, "j_2"]&G/H\ar[r]\ar[d, equal] &1\\
\phantom{\zeta:\ }1\ar[r]&F\ar[r, "j_1"]&E\ar[r,"p"]&G/H\ar[r]&1
\end{tikzcd}
\]
where $j_1$ and $j_2$ are the canonical homomorphisms and $p([x,g])=\pi(g)$ for any $[x,g]\in E$ ($[x,g]$ denotes the class of the element $(x,g)$ modulo the subgroup $\left\langle \bigl(f(x),\iota(x)^{-1}\bigr)\mid x\in H\right\rangle$). The extension in the second row is denoted by $f\zeta$, and the map
\[
\begin{split}
\delta:\Hom(H,F)&\longrightarrow \Ext(G/H,F)\\
f\quad &\mapsto\qquad [f\zeta]
\end{split}
\]
is precisely the connecting homomorphism.

Given a section $s:G/H\rightarrow G$, the element in $\Hc^2_\textup{sym}(G/H,H)$ corresponding to $\zeta$ is the class $[\sigma]$ of the $2$-cocycle
\begin{equation}\label{eq:sigma}
\begin{split}
\sigma:G/H\times G/H&\longrightarrow H\\
(\bar g_1,\bar g_2)\ &\mapsto\ s(\bar g_1)s(\bar g_2)s(\bar g_1\bar g_2)^{-1}.
\end{split}\end{equation}
Then $\tilde s:G/H\rightarrow E$, $\bar g\mapsto [e,s(\bar g)]$ is a section of $p$ in $f\zeta$, and for $\bar g_1,\bar g_2\in G/H$
\[
\tilde s(\bar g_1)\tilde s(\bar g_2)\tilde s(\bar g_1\bar g_2)^{-1}=[(e,\iota\bigl(\sigma(\bar g_1,\bar g_2)\bigr)]
=[f\bigl(\sigma(\bar g_1,\bar g_2)\bigr),e].
\]
Hence the connecting homomorphim $\delta$ corresponds to the map (denoted too by $\delta$):
\begin{equation}\label{eq:deltaH2}
\begin{split}
\delta:\Hom(H,F)&\longrightarrow \Hc^2_\textup{sym}(G/H,F)\\
 f\ &\mapsto\quad [f\circ\sigma]\,.
\end{split}
\end{equation}

The long exact sequence \eqref{eq:long} can then be substituted by
\begin{multline}\label{eq:longH}
1\rightarrow \Hom(G/H,F)\xrightarrow{\pi^*}\Hom(G,F)\xrightarrow{\iota^*}\Hom(H,F)\\
\xrightarrow{\,\delta\,}\Hc^2_\textup{sym}(G/H,F)\xrightarrow{\pi^*}\Hc^2_\textup{sym}(G,F)\xrightarrow{\iota^*}\Hc^2_\textup{sym}(H,F)\rightarrow 1
\end{multline}
with $\delta$ in \eqref{eq:deltaH2} and $\pi^*$ and $\iota^*$ in the second row as in \eqref{eq:f*H2}.

The exactness of \eqref{eq:long} has the following consequence that will be critical later on:

\begin{proposition}\label{pr:tau_extension}
Let $G$ and $F$ be abelian groups, $H$ a subgroup of $G$, and $\tau':H\times H\rightarrow F$ a symmetric $2$-cocycle. Then there is a symmetric $2$-cocycle $\tau\in \Zc^2_\textup{sym}(G,F)$ that extends $\tau'$ (i.e., $\tau'=\tau\vert_{H\times H}$).
\end{proposition}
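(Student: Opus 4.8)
The plan is to read off the result from the exactness of the long exact sequence \eqref{eq:longH}, whose terminal segment
\[
\Hc^2_\textup{sym}(G,F)\xrightarrow{\ \iota^*\ }\Hc^2_\textup{sym}(H,F)\rightarrow 1
\]
asserts that the restriction map $\iota^*$ is surjective. Since $\tau'$ determines a class $[\tau']\in\Hc^2_\textup{sym}(H,F)$, surjectivity of $\iota^*$ produces a symmetric $2$-cocycle $\hat\tau\in\Zc^2_\textup{sym}(G,F)$ with $\iota^*[\hat\tau]=[\tau']$. By the description of $\iota^*$ in \eqref{eq:f*H2}, we have $\iota^*[\hat\tau]=[\hat\tau\circ(\iota\times\iota)]=[\hat\tau\vert_{H\times H}]$, so $\hat\tau\vert_{H\times H}$ and $\tau'$ represent the same class in $\Hc^2_\textup{sym}(H,F)$.

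This yields an extension only up to a coboundary, and the remaining task is to upgrade ``cohomologous'' to ``equal on $H\times H$''. From $[\hat\tau\vert_{H\times H}]=[\tau']$ there is a map $\gamma:H\rightarrow F$ with
\[
\tau'(h_1,h_2)=\hat\tau(h_1,h_2)\,\dc\gamma(h_1,h_2)\qquad(h_1,h_2\in H),
\]
where $\dc\gamma(h_1,h_2)=\gamma(h_1)\gamma(h_2)\gamma(h_1h_2)^{-1}$. Extend $\gamma$ to an arbitrary set map $\hat\gamma:G\rightarrow F$ (for instance by declaring $\hat\gamma\equiv e$ on $G\setminus H$); this is harmless because $\gamma$ is merely a map, not a homomorphism. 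Because $H$ is closed under multiplication, for $h_1,h_2\in H$ the three arguments $h_1,h_2,h_1h_2$ all lie in $H$, so $\dc\hat\gamma(h_1,h_2)=\dc\gamma(h_1,h_2)$; that is, $\dc\hat\gamma\vert_{H\times H}=\dc\gamma$.

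Finally I would set $\tau=\hat\tau\cdot\dc\hat\gamma$. As the pointwise product of a symmetric $2$-cocycle and a $2$-coboundary, and since $\Bc^2(G,F)\subseteq\Zc^2_\textup{sym}(G,F)$, the map $\tau$ lies in $\Zc^2_\textup{sym}(G,F)$; its restriction to $H\times H$ equals $\hat\tau\vert_{H\times H}\cdot\dc\gamma=\tau'$ by the displayed identity, so $\tau$ extends $\tau'$. The only substantive input is the surjectivity of $\iota^*$, i.e.\ the exactness of \eqref{eq:longH} (equivalently \eqref{eq:long}) at the last nontrivial spot, which ultimately rests on the vanishing of $\Ext^2$ over $\ZZ$; the passage from a cohomology relation to an on-the-nose extension via the extended coboundary $\dc\hat\gamma$ is the one point requiring care, but it is routine.
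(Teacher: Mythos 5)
Your argument is correct and is essentially identical to the paper's own proof: both invoke the surjectivity of $\iota^*$ in the long exact sequence \eqref{eq:longH} to find a cocycle whose restriction is cohomologous to $\tau'$, then extend the correcting map $\gamma$ from $H$ to $G$ arbitrarily and multiply by its coboundary to achieve equality on the nose. Nothing further is needed.
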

\begin{proof}
The homomorphism $\iota^*:\Hc^2_\textup{sym}(G,F)\rightarrow\Hc^2_\textup{sym}(H,F)$ being surjective, there is a $2$-cocycle $\tilde\tau\in \Zc^2_\textup{sym}(G,F)$ such that $[\tilde\tau\vert_{H\times H}]=\iota^*([\tilde\tau])=[\tau']$, so there is a map $\gamma:H\rightarrow F$ such that $\tau'=\bigl(\tilde\tau\vert_{H\times H})(\dc \gamma)$. That is, 
\[
\tau'(h_1,h_2)=\tilde\tau(h_1,h_2)\gamma(h_1)\gamma(h_2)\gamma(h_1h_2)^{-1}
\] 
for any $h_1,h_2\in H$. Extend $\gamma$ to a map $\tilde \gamma:G\rightarrow F$ (for instance, with $\tilde\gamma(g)=e$ for any $g\in G\setminus H$). Then $\tau=\tilde\tau(\dc\tilde\gamma)$ satisfies $\tau\vert_{H\times H}=\tau'$.
\end{proof}

\bigskip

\section{Cocycle twists}\label{se:cocycle_twists}

\begin{df}
Let $G$ be an abelian group, let $\cA$ be an algebra over $\FF$ endowed with a $G$-grading: $\cA=\bigoplus_{g\in G}\cA_g$, and let $\tau:G\times G\rightarrow \FF^\times$ be a symmetric $2$-cocycle.
Define a new multiplication on $\cA$ by the formula
\begin{equation}\label{eq:tau_twist}
x*y\bydef \tau(g_1,g_2)xy
\end{equation}
for $g_1,g_2\in G$, $x\in \cA_{g_1}$, $y\in\cA_{g_2}$.

The new algebra thus defined will be called the \emph{$\tau$-twist} of $\cA$, and will be denoted by $\cA^\tau$.
\end{df}

\begin{remark}
In \cite{MoodyPatera} (see also \cite{MontignyPatera}) a more general situation is considered, where $\tau:G\times G\rightarrow \FF$ is allowed to take the value $0$, but still being symmetric and satisfying the cocycle condition: $\tau(g_1,g_2)\tau(g_1g_2,g_3)=\tau(g_1,g_2g_3)\tau(g_2,g_3)$ for $g_1,g_2,g_3\in G$. The resulting \emph{twisted} algebra is said to be a \emph{graded contraction} of $\cA$ and the interest in the above mentioned references lies in those $\tau$'s that indeed take the value $0$. In this way one can obtain, for example, solvable Lie algebras as graded contractions of semisimple Lie algebras, as some nonzero structure constants in the original algebra may become $0$ in the twist.
\end{remark}

\begin{example}\label{ex:C} 
Consider the real algebra of complex numbers $\CC$, graded by the cyclic group of order $2$: $C_2=\{e,h\}$, so $\CC_e=\RR 1$, $\CC_h=\CC\bi$. Let $\tau:C_2\times C_2\rightarrow \RR^\times$ be the symmetric $2$-cocycle such that $\tau(e,e)=\tau(e,h)=1$ and $\tau(h,h)=-1$. Then in the $\tau$-twist $\CC^\tau$ we have
\[
1*1=1, \quad 1*\bi=\bi*1=\bi,\quad\text{but}\quad \bi*\bi= 1,
\]
so that $\CC^\tau$ is isomorphic to the group algebra $\RR C_2$ (isomorphic to $\RR\times\RR$).
\end{example}

\begin{example}\label{ex:twisted_group_algebra}
For any abelian group $G$ and symmetric $2$-cocycle $\tau\in\Zc^2_\textup{sym}(G,\FF^\times)$, the $\tau$-twist $\bigl(\FF G\bigr)^\tau$ of the group algebra $\FF G$ is denoted traditionally by $\FF^\tau G$. 

This makes sense for not necessarily symmetric $2$-cocycles and nonabelian groups. The algebras $\FF^\tau G$ are called \emph{twisted group algebras}, and they play a key role in Schur's theory of projective representations of finite groups. Twisted group algebras are, up to isomorphism, the graded-division algebras with one-dimensional homogeneous components. Moreover, for $\tau_1,\tau_2\in \Zc^2(G,\FF^\times)$, $\FF^{\tau_1}G\simeq_G \FF^{\tau_2}G$ if and only if $[\tau_1]=[\tau_2]$ in $\Hc^2(G,\FF^\times)$.

Natural examples of twisted group algebras are quaternion algebras over a field $\FF$, $\chr\FF\neq 2$, which can be described, up to isomorphism, as the twisted group algebras $\FF^\tau G$, with $G=C_2\times C_2$ and $\tau$ a \emph{not symmetric} $2$-cocycle.
\end{example}

\medskip

Some basic properties of cocycle twists are given in the next result.

\begin{proposition}\label{pr:cocycle_twists}
Let $G$ be an abelian group and let $\cA=\bigoplus_{g\in G}\cA_g$ be a $G$-graded algebra over $\FF$.
\begin{romanenumerate}
\item For $\sigma,\tau\in\Zc^2_\textup{sym}(G,\FF^\times)$, $\bigl(\cA^\sigma\bigr)^\tau=\cA^{\sigma\tau}$.

\item If $\tau\in\Bc^2(G,\FF^\times)$, then $\cA^\tau\simeq_G\cA$. More generally, if $\tau_1,\tau_2\in \Zc^2_\textup{sym}(G,\FF^\times)$ and $[\tau_1]=[\tau_2]$ in $\Hc^2_\textup{sym}(G,\FF^\times)$, then $\cA^{\tau_1}\simeq_G\cA^{\tau_2}$. In other words, for $\tau\in\Zc^2_\textup{sym}(G,\FF^\times)$, the $G$-graded isomorphism class of $\cA^\tau$ depends only on $[\tau]\in\Hc^2_\textup{sym}(G,\FF^\times)$.

\item If $\overline{\FF}$ is an algebraic closure of $\FF$ and $\tau\in\Zc^2_\textup{sym}(G,\FF^\times)$, then $\cA^\tau\otimes_\FF\overline{\FF}\simeq_G\cA\otimes_\FF\overline{\FF}$. In particular, if $\cA$ is an asociative, alternative, Lie, linear Jordan, ...., algebra, so is $\cA^\tau$.

\item If $\cA$ is graded-simple, so is $\cA^\tau$.
\end{romanenumerate}
\end{proposition}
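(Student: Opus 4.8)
The plan is to prove the four parts in order, each feeding into the next, since all but one are direct computations on homogeneous elements.

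For (i) I would compute on homogeneous $x\in\cA_{g_1}$, $y\in\cA_{g_2}$: in $\bigl(\cA^\sigma\bigr)^\tau$ the product is $\tau(g_1,g_2)\bigl(\sigma(g_1,g_2)xy\bigr)=(\sigma\tau)(g_1,g_2)xy$, which is exactly the product in $\cA^{\sigma\tau}$, and bilinearity extends this to all of $\cA$ while the grading stays the same throughout. (Note $\sigma\tau\in\Zc^2_\textup{sym}(G,\FF^\times)$ because that group is abelian.) For (ii) I would first settle the coboundary case $\tau=\dc\gamma$ with $\gamma:G\to\FF^\times$: define $\vphi:\cA^\tau\to\cA$ by $\vphi(x)=\gamma(g)x$ for $x\in\cA_g$, extended linearly. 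This $\vphi$ is graded (it rescales each $\cA_g$) and bijective (as each $\gamma(g)\in\FF^\times$), and the identity $\tau(g_1,g_2)=\gamma(g_1)\gamma(g_2)\gamma(g_1g_2)^{-1}$ is precisely what makes $\vphi$ multiplicative. The general case then reduces to this: if $[\tau_1]=[\tau_2]$ then $\tau_1=\tau_2\,\dc\gamma$ for some $\gamma$, so by (i) $\cA^{\tau_1}=\bigl(\cA^{\tau_2}\bigr)^{\dc\gamma}\simeq_G\cA^{\tau_2}$, applying the coboundary case to $\cA^{\tau_2}$.

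Part (iii) is the conceptual heart, and the step I expect to require the most care. The key input is that $\overline{\FF}^\times$ is divisible, hence an injective $\ZZ$-module, so $\Ext(G,\overline{\FF}^\times)=0$ and therefore $\Hc^2_\textup{sym}(G,\overline{\FF}^\times)=0$ under the identification of Section \ref{se:ext}. Viewing $\tau$ as a cocycle with values in $\overline{\FF}^\times\supseteq\FF^\times$, it is thus a coboundary $\dc\gamma$ for some $\gamma:G\to\overline{\FF}^\times$. Next I would verify that twisting commutes with base change, namely $\cA^\tau\otimes_\FF\overline{\FF}\simeq_G\bigl(\cA\otimes_\FF\overline{\FF}\bigr)^\tau$ as $G$-graded $\overline{\FF}$-algebras (on homogeneous tensors both products equal $\tau(g_1,g_2)(xy)\otimes\lambda\mu$); then (ii), applied over $\overline{\FF}$ to the coboundary $\dc\gamma$, yields $\bigl(\cA\otimes\overline{\FF}\bigr)^\tau\simeq_G\cA\otimes\overline{\FF}$, giving the first assertion. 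For the final claim I would use that each of the stated classes of algebras is defined by (multilinear, or suitably linearized) polynomial identities, and that such an identity holds in an $\FF$-algebra $\cB$ if and only if it holds in $\cB\otimes_\FF\overline{\FF}$; applying this to $\cB=\cA^\tau$ and combining with $\cA^\tau\otimes\overline{\FF}\simeq_G\cA\otimes\overline{\FF}$ transfers the identity from $\cA$ to $\cA^\tau$.

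Finally, for (iv) I would observe that because $\tau$ takes values in $\FF^\times$, for homogeneous $x\in\cA_{g_1}$ and a graded subspace $\cI$ one has $x*y=\tau(g_1,g_2)xy\in\cI\iff xy\in\cI$; hence $\cI$ is a graded ideal of $\cA^\tau$ exactly when it is one of $\cA$, so the two algebras share the same lattice of graded ideals. The same relation $x*y=\tau(g_1,g_2)xy$ shows $\cA^2$ and $\bigl(\cA^\tau\bigr)^2$ coincide as subspaces, so one has nontrivial multiplication iff the other does. Graded-simplicity of $\cA$ therefore passes to $\cA^\tau$. The only genuine obstacle in the whole proposition is the cohomological vanishing in (iii); everything else is a direct homogeneous computation.
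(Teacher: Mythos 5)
Your proposal is correct and follows essentially the same route as the paper: the direct homogeneous computation for (i) and (ii) via the rescaling map $x\mapsto\gamma(g)x$, the divisibility of $\overline{\FF}^\times$ forcing $\Hc^2_\textup{sym}(G,\overline{\FF}^\times)$ to be trivial together with the compatibility $\cA^\tau\otimes_\FF\overline{\FF}=\bigl(\cA\otimes_\FF\overline{\FF}\bigr)^\tau$ for (iii), and the observation that twisting by nonzero scalars preserves graded ideals for (iv). The only differences are organizational (you split (ii) into the coboundary case plus a reduction via (i), and you spell out the polynomial-identity transfer that the paper leaves implicit), not mathematical.
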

\begin{proof}
Part (i) is clear. For (ii), let $\gamma:G\rightarrow\FF^\times$ be a map such that $\tau_1=\tau_2(\dc\gamma)$. The multiplication in $\cA^{\tau_1}$ and $\cA^{\tau_2}$ are  $x*y=\tau_1(g_1,g_2)xy$ and $x\star y=\tau_2(x,y)xy$, respectively, for $g_1,g_2\in G$, $x\in \cA_{g_1}$, $y\in\cA_{g_2}$. Then the linear automorphism $\varphi\in\End_\FF(\cA)$ such that $\varphi(x)=\gamma(g)x$ for $g\in G$ and $x\in\cA_g$ preserves the grading and satisfies:
\[
\begin{split}
\varphi(x)\star\varphi(y)&=\gamma(g_1)\gamma(g_2)\tau_2(g_1,g_2)xy\\
&=\gamma(g_1g_2)(\dc\gamma)(g_1,g_2)\tau_2(g_1,g_2)xy\\
&=\gamma(g_1g_2)\tau_1(g_1,g_2)xy\\
&=\gamma(g_1g_2)x*y=\varphi(x*y).
\end{split}
\]
For (iii) note that for $\tau\in\Zc^2_\textup{sym}(G,\FF^\times)\subseteq \Zc^2_\textup{sym}(G,\overline{\FF}^\times)$, $\cA^\tau\otimes_\FF\overline{\FF}=\bigl(\cA\otimes_\FF\overline{\FF}\bigr)^\tau$. But $\overline{\FF}^\times$ is divisible (hence injective as a $\ZZ$-module), so $\Ext(G,\overline{\FF}^\times)\simeq \Hc^2_\textup{sym}(G,\overline{\FF}^\times)=1$, and hence, because of (ii), 
$\bigl(\cA\otimes_\FF\overline{\FF}\bigr)^\tau\simeq_G\cA\otimes_\FF\overline{\FF}$.

As for (iv), it is enough to note that the ideal generated by any homogeneous element $x\in\cA_g$ is the same in $\cA$ or in $\cA^\tau$.
\end{proof}

\begin{example}\label{ex:chi_sigma} 
Let $G$ be an abelian group, $H$ a subgroup, and $\overline{G}=G/H$ the corresponding quotient. Let $\cA$ be a $\overline{G}$-graded algebra and let $\chi\in\Hom(H,\FF^\times)$ (a character on $H$). Finally, let $s:\overline{G}\rightarrow G$ be a section of the canonical projection $\pi:G\rightarrow \overline{G}$. Then in \cite[Definition 6.3.1]{ABFP}, the algebra $\cA_\chi$ is defined on the same vector space as $\cA$, but with new multiplication
\[
x\cdot_\chi y=\chi\Bigl(s(\overline{g_1})s(\overline{g_2})s(\overline{g_1}\overline{g_2})^{-1}\Bigr)xy
\]
for $\overline{g}_1,\overline{g}_2\in \overline{G}$, $x\in \cA_{\overline{g_1}}$, $y\in\cA_{\overline{g_2}}$. We see that $\cA_\chi$ is the $(\chi\circ\sigma)$-twist $\cA^{\chi\circ\sigma}$ of $\cA$, where $\sigma\in\Zc^2_\textup{sym}(\overline{G},H)$ is the symmetric $2$-cocycle considered in \eqref{eq:sigma}. Moreover, note that $[\chi\circ\sigma]=\delta(\chi)$, where $\delta$ is the connecting homomorphism in \eqref{eq:deltaH2} for $F=\FF^\times$.
\end{example}

\smallskip

Recall that the \emph{centroid} $C(\cA)$ of an algebra $\cA$ is the unital associative subalgebra of $\End_\FF(\cA)$ given by
\[
C(\cA)=\{c\in\End_\FF(\cA)\mid c(xy)=c(x)y=xc(y)\ \forall x,y\in\cA\}\,.
\]
If the algebra $\cA$ is simple, then $C(\cA)$ is a field extension of $\FF$. The algebra $\cA$ is said to be \emph{central simple} if it is simple and this field extension is trivial: $C(\cA)=\FF 1$. (Here $1$ denotes the identity map on $\cA$.)  Any simple algebra is central simple when considered as an algebra over its centroid.

Given an abelian group $G$ and a $G$-graded algebra $\cA=\bigoplus_{g\in G}\cA_g$, consider the subspaces
\[
C(\cA)_g=\{c\in C(\cA)\mid c\cA_{g'}\subseteq \cA_{gg'}\ \forall g'\in G\}
\]
for $g\in G$. Their sum $\bigoplus_{g\in G}C(\cA)_g$ is direct, but it may fail to be the whole $C(\cA)$.
The $G$-graded algebra $\cA$ is called \emph{graded-central} if $C(\cA)_e=\FF 1$.

Also, the $G$-graded algebra $\cA$ is said to be \emph{graded-simple} if $\cA^2=\cA$ and $\cA$ does not contain any proper graded ideals, and \emph{graded-central-simple} if it is graded-simple and graded-central.

If $\cA$ is graded-simple, then $C(\cA)=\bigoplus_{g\in G}C(\cA)_g$ (\cite[Proposition 2.16]{BenkartNeher}) and it is a \emph{graded-field}, i.e., it is commutative and any nonzero homogeneous element is invertible. If $H$ is the support of the $G$-grading on $C(\cA)$: $H=\{g\in G\mid C(\cA)_g\neq 0\}$, $H$ is a subgroup of $G$. Then if $\KK=C(\cA)_e$ (a field extension of $\FF$), $C(\cA)$ is isomorphic, as a graded algebra, to a twisted group algebra $\KK^\tau H$ (see Example \ref{ex:twisted_group_algebra}), for a symmetric $2$-cocycle $\tau\in\Zc^2_\textup{sym}(H,\KK^\times)$, the $G$-grading (with support $H$) in $\KK^\tau H$ being the natural one. Moreover, $\cA$ is graded-central-simple when considered as an algebra over $\KK$.

The next results shows the behavior of the centroid of a $G$-graded-central simple algebra under cocycle twists.

\begin{proposition}\label{pr:centroid_twists}
Let $G$ be an abelian group and $\cB$ a $G$-graded-central-simple algebra. For any $\tau\in\Zc^2_\textup{sym}(G,\FF^\times)$, $C(\cB^\tau)\simeq_G C(\cB)^\tau$.
\end{proposition}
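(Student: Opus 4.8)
The plan is to exhibit an explicit graded isomorphism $\Phi\colon C(\cB)^\tau\to C(\cB^\tau)$ obtained by rescaling homogeneous centroid elements. Since $\cB$ is graded-simple, $C(\cB)=\bigoplus_{g\in G}C(\cB)_g$, and by Proposition \ref{pr:cocycle_twists}(iv) the twist $\cB^\tau$ is again graded-simple, so likewise $C(\cB^\tau)=\bigoplus_{g\in G}C(\cB^\tau)_g$. It therefore suffices to define $\Phi$ on homogeneous components and extend linearly. For $c\in C(\cB)_g$, I would set $\Phi(c)$ to be the endomorphism of $\cB$ acting on each $\cB_{g'}$ as $\tau(g,g')\,c$, i.e.
\[
\Phi(c)\vert_{\cB_{g'}}=\tau(g,g')\,c\vert_{\cB_{g'}}\qquad(g'\in G).
\]
The motivation is transparent: the products in $\cB$ and in $\cB^\tau$ differ by the scalar $\tau$, and the factor $\tau(g,g')$ is exactly what is needed to make a degree-$g$ centroid element of $\cB$ compatible with the twisted multiplication $*$ of $\cB^\tau$.

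The first step is to check that $\Phi(c)\in C(\cB^\tau)_g$ for $c\in C(\cB)_g$. Writing $x\in\cB_{g_1}$, $y\in\cB_{g_2}$ and using $c(xy)=c(x)y=xc(y)$ together with $xy\in\cB_{g_1g_2}$, a direct computation gives
\[
\Phi(c)(x*y)=\tau(g_1,g_2)\tau(g,g_1g_2)\,c(xy),
\]
\[
\Phi(c)(x)*y=\tau(gg_1,g_2)\tau(g,g_1)\,c(xy),\qquad
x*\Phi(c)(y)=\tau(g_1,gg_2)\tau(g,g_2)\,c(xy).
\]
The equality of the first two expressions is precisely the cocycle identity $\tau(g,g_1)\tau(gg_1,g_2)=\tau(g,g_1g_2)\tau(g_1,g_2)$, while the equality of the first and third follows from the cocycle identity together with the symmetry of $\tau$ (and commutativity of $G$). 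Since moreover $\Phi(c)\cB_{g'}\subseteq\cB_{gg'}$, this shows $\Phi(c)\in C(\cB^\tau)_g$.

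The second step is to verify that $\Phi$ is multiplicative from $C(\cB)^\tau$ to $C(\cB^\tau)$. The product in either centroid is composition of endomorphisms, and the product in $C(\cB)^\tau$ of $c_1\in C(\cB)_{g_1}$ and $c_2\in C(\cB)_{g_2}$ is $\tau(g_1,g_2)\,c_1c_2$ with $c_1c_2\in C(\cB)_{g_1g_2}$. Evaluating $\Phi\bigl(\tau(g_1,g_2)c_1c_2\bigr)$ and $\Phi(c_1)\Phi(c_2)$ on $\cB_{g_3}$ and comparing, the identity to be checked is exactly
\[
\tau(g_1,g_2)\tau(g_1g_2,g_3)=\tau(g_1,g_2g_3)\tau(g_2,g_3),
\]
which is again the cocycle condition. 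Thus $\Phi$ is a homomorphism of $G$-graded algebras. Bijectivity is then automatic: applying the same construction to $\cB^\tau$ with the symmetric $2$-cocycle $\tau^{-1}$ yields, via Proposition \ref{pr:cocycle_twists}(i) (so that $(\cB^\tau)^{\tau^{-1}}=\cB$), a map $C(\cB^\tau)^{\tau^{-1}}\to C(\cB)$ that inverts $\Phi$ at the level of the underlying linear maps, because $\tau(g,g')\tau^{-1}(g,g')=1$ on each component. Hence $\Phi$ is a graded isomorphism and $C(\cB^\tau)\simeq_G C(\cB)^\tau$.

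I anticipate that the only genuine obstacle is guessing the correct rescaling factor $\tau(g,g')$; once it is in hand, every verification collapses to the cocycle identity (with symmetry needed only for the third centroid equation and for identifying the inverse), so no delicate estimate or deeper structural input is required. I note in passing that the argument only uses graded-simplicity of $\cB$, the graded-central hypothesis being inherited from the statement but not essential here.
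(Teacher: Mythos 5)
Your proof is correct and coincides with the paper's: the map $\Phi(c)=c^\tau$ with $c^\tau\vert_{\cB_{g'}}=\tau(g,g')\,c\vert_{\cB_{g'}}$ is exactly the isomorphism used there, and your verifications (centroid property via the cocycle identity and symmetry, multiplicativity via the cocycle identity again) match the paper's computations. Your explicit bijectivity argument via the $\tau^{-1}$-construction is a small refinement of a point the paper leaves implicit, but it is the same approach.
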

\begin{proof}
For any homogeneous element $c\in C(\cB)_h$, define $c^\tau\in\End_\FF(\cB)$ by
\[
c^\tau(x)=\tau(h,g)c(x)
\]
for $g\in G$ and $x\in\cB_g$. Denote by $*$ the multiplication in $\cB^\tau$: $x*y=\tau(g_1,g_2)xy$, for $g_1,g_2\in G$, $x\in\cB_{g_1}$, $y\in\cB_{g_2}$. Then, under these assumptions,
\[
\begin{split}
c^\tau(x*y)&=\tau(g_1,g_2)c^\tau(xy)\\
  &=\tau(g_1,g_2)\tau(h,g_1g_2)c(xy)\\
  &=\tau(h,g_1)\tau(hg_1,g_2)c(x)y\\
  &=\tau(hg_1,g_2)c^\tau(x)y=c^\tau(x)*y
\end{split}
\]
and also, because of the symmetry of $\tau$:
\[
\begin{split}
c^\tau(x*y)&=\tau(g_1,g_2)\tau(h,g_1g_2)c(xy)\\
  &=\tau(h,g_2)\tau(g_1,hg_2)xc(y)=x*c^\tau(y).
\end{split}
\]
Thus $c^\tau\in C(\cB^\tau)_h$. Besides, if $c_1\in C(\cB)_{h_1}$ and $c_2\in C(\cB)_{h_2}$, then for $x\in \cB_g$ we have:
\[
\begin{split}
c_1^\tau c_2^\tau(x)&=\tau(h_1,h_2g)\tau(h_2,g)c_1c_2(x)\\
  &=\tau(h_1,h_2)\tau(h_1h_2,g)c_1c_2(x)\\
   &=\tau(h_1,h_2)(c_1c_2)^\tau (x),
\end{split}
\]
so $c_1^\tau c_2^\tau=\tau(h_1,h_2)(c_1c_2)^\tau$, and the map $C(\cB)^\tau\rightarrow C(\cB^\tau)$, $c\mapsto c^\tau$, for homogeneous $c$, is an isomorphism.
\end{proof}

\bigskip

\section{Loop algebras}\label{se:loop}

Given an abelian group $G$, a subgroup $H$, the canonical projection $\pi:G\rightarrow \overline{G}=G/H$, $g\mapsto \pi(g)=\overline{g}$, and an algebra $\cA$ graded by $\overline{G}$: $\cA=\bigoplus_{\overline{g}\in\overline{G}}\cA_{\overline{g}}$, the \emph{loop algebra} $L_\pi(\cA)$ is the $G$-graded algebra
\[
L_\pi(\cA)=\bigoplus_{g\in G}\cA_{\overline{g}}\otimes g
\]
which is a subalgebra of the tensor product $\cA\otimes_\FF\FF G$, where $\FF G$ is the group algebra of $G$ (see \cite[Definition 3.1.1]{ABFP}).

\smallskip

If $\cA$ is $G$-graded-central-simple, then its centroid $C(\cA)$ is said to be \emph{split} (see \cite[Definition 4.3.6]{ABFP}) if $C(\cA)$ is isomorphic, as a graded algebra, to the (untwisted) group algebra: $C(\cA)\simeq_G\FF H$. This is always the case if $\FF$ is algebraically closed by Proposition \ref{pr:cocycle_twists}.

The main results in \cite{ABFP} reduce the study of the graded-central-simple algebras with split centroid to the study of the central simple graded algebras (i.e., graded algebras which are central simple as (ungraded) algebras).

\begin{theorem}\label{th:ABFP} \textup{(see \cite[Theorem 7.1.1]{ABFP})}\quad
Let $G$ be an abelian group, $H$ a subgroup of $G$, and $\pi:G\rightarrow \overline{G}=G/H$ the canonical projection.
\begin{enumerate}
\item If $\cA$ is a central simple algebra graded by $\overline{G}$, then the loop algebra $L_\pi(\cA)$ is a $G$-graded-central-simple algebra, and the map
\[
\begin{split}
\FF H&\longrightarrow C\bigl(L_\pi(\cA)\bigr)\\
 h\, &\mapsto \bigl(x\otimes g\mapsto x\otimes hg)
\end{split}
\]
for $g\in G$, $x\in \cA_{\pi(g)}$, is an isomorphism of $G$-graded algebras. (Hence $C\bigl(L_\pi(\cA)\bigr)\simeq_G\FF H$.)

\item if $\cB$ is a $G$-graded-central-simple algebra with split centroid $C(\cB)\simeq_G\FF H$, then there exists a central simple and $\overline{G}$-graded algebra $\cA$ such that $\cB\simeq_G L_\pi(\cA)$.

\item If $\cA$ and $\cA'$ are central simple and $\overline{G}$-graded algebras, then $L_\pi(\cA)\simeq_G L_\pi(\cA')$ if and only if there is a character $\chi\in\Hom(H,\FF^\times)$ such that $\cA'\simeq \cA_\chi=\cA^{\chi\circ\sigma}$ (as in Example \ref{ex:chi_sigma}).
\end{enumerate}
\end{theorem}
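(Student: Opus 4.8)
The plan is to treat the three parts in turn: part (1) by direct computation combined with a "fiber over the identity" argument, part (2) by inverting that construction, and part (3) by tracking the automorphism induced on the centroid. For part (1), I would first check that $L_\pi(\cA)=\bigoplus_{g\in G}\cA_{\overline{g}}\otimes g$ is a $G$-graded subalgebra of $\cA\otimes_\FF\FF G$: since $(x\otimes g)(y\otimes g')=xy\otimes gg'$ and $xy\in\cA_{\overline{g}\,\overline{g'}}=\cA_{\overline{gg'}}$, the product lands in $\cA_{\overline{gg'}}\otimes gg'$. Next, for $h\in H$ the operator $c_h\colon x\otimes g\mapsto x\otimes hg$ commutes with every left and right multiplication, so it lies in $C\bigl(L_\pi(\cA)\bigr)$, and it is homogeneous of degree $h$; the assignment $h\mapsto c_h$ is then an injective algebra homomorphism $\FF H\to C\bigl(L_\pi(\cA)\bigr)$, because distinct $h$ shift the $G$-degree differently. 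The real content is graded-simplicity, graded-centrality, and surjectivity of this map onto the centroid. For all three I would work over $R:=\FF H$ (acting through the $c_h$) and pass to the fiber $L_\pi(\cA)\big/\mathfrak{m}\,L_\pi(\cA)\cong\cA$, where $\mathfrak{m}=\langle c_h-1:h\in H\rangle$ is the augmentation ideal. A graded ideal of $L_\pi(\cA)$ is an $R$-submodule; because the support is a union of $H$-cosets and $R$ acts freely along each coset, such an ideal is determined by its image in the fiber, a graded ideal of the simple algebra $\cA$, hence $0$ or all of $\cA$, forcing the original ideal to be $0$ or everything. The same fiber analysis identifies $C\bigl(L_\pi(\cA)\bigr)$ with $\bigoplus_{h\in H}\FF c_h$: a homogeneous centroid element induces one on the central algebra $\cA$, hence a scalar, and unwinding the $R$-action pins it to an $\FF$-multiple of a single $c_h$. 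In particular $C\bigl(L_\pi(\cA)\bigr)_e=\FF 1$, so $L_\pi(\cA)$ is graded-central-simple and $\FF H\to C\bigl(L_\pi(\cA)\bigr)$ is a graded isomorphism.

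For part (2), I would invert this: given $\cB$ graded-central-simple with $C(\cB)\simeq_G\FF H$, set $\cA:=\cB/\mathfrak{m}\cB=\FF\otimes_{\FF H}\cB$, the fiber of $\cB$ over the augmentation of its centroid. Since $\mathfrak{m}$ collapses each $H$-coset of the support to a single class in $\overline{G}=G/H$, the quotient $\cA$ inherits a $\overline{G}$-grading; it is central simple as an ungraded algebra, simplicity coming from graded-simplicity of $\cB$ together with a descent from the centroid graded-field, and centrality because $C(\cB)$ has been reduced to $\FF$. As $\cB$ is free over the graded field $C(\cB)\simeq_G\FF H$, a choice of homogeneous $C(\cB)$-basis yields a graded $\FF H$-module isomorphism $\cB\cong\cA\otimes_\FF\FF H$, and transporting the multiplication produces the graded algebra isomorphism $\cB\simeq_G L_\pi(\cA)$.

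For part (3), suppose $\varphi\colon L_\pi(\cA)\to L_\pi(\cA')$ is a graded isomorphism. It maps centroid to centroid, inducing a graded automorphism of $\FF H$, which is necessarily $c_h\mapsto\chi(h)c_h$ for a unique character $\chi\in\Hom(H,\FF^\times)$. Passing to fibers, $\varphi$ gives an ungraded isomorphism $\cA\to\cA'$; comparing the grading on $\cA'$ with the one transported through $\varphi$ shows the two differ exactly by the cocycle $\sigma$ of \eqref{eq:sigma} evaluated through $\chi$, so $\cA'\simeq\cA^{\chi\circ\sigma}=\cA_\chi$ as in Example \ref{ex:chi_sigma}. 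Conversely, for any $\chi\in\Hom(H,\FF^\times)$ an explicit rescaling $x\otimes g\mapsto\chi(\ldots)\,x\otimes g$ furnishes a graded isomorphism $L_\pi(\cA_\chi)\simeq_G L_\pi(\cA)$, with Proposition \ref{pr:cocycle_twists} guaranteeing that only the class $[\chi\circ\sigma]$ matters.

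I expect the main obstacle to be the simplicity and centroid computations carried through the fiber over the centroid, in both directions, in the general possibly nonassociative and infinite-dimensional setting. Ideals must be handled through the multiplication algebra rather than naive products, and the Nakayama-type step — that an $R$-submodule, or a centroid element, vanishing in every fiber already vanishes — is delicate because $R=\FF H$ is far from local when $H$ is infinite. Making the freeness of $\FF G$ over $\FF H$ along each coset do the bookkeeping cleanly, and deducing ungraded central simplicity of the fiber $\cA$ in part (2), is the crux, and is precisely where the central-closure machinery of \cite{ABFP} is needed.
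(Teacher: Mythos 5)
First, a point of comparison: the paper does not prove this statement at all — it is imported verbatim from \cite[Theorem 7.1.1]{ABFP} and used as a black box — so there is no internal proof to measure yours against; any argument has to reconstruct \cite{ABFP}. Your outline is in the right spirit (it is close to what \cite{ABFP} actually does, working over the centroid and reducing to a "central image"), but as written it has gaps that you only partly flag and that are not merely technical.

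The main one is the very first step of your fiber argument: you assert that a graded ideal $I$ of $L_\pi(\cA)$ is an $R$-submodule for $R=\FF H$ acting through the $c_h$. For $c$ in the centroid and $I$ an ideal one only gets $c(xy)=c(x)y\in c(I)$, i.e.\ that $c(I)$ is again an ideal; one does \emph{not} get $c(I)\subseteq I$ unless, say, $\cA$ is unital or one already knows $I=\Mult\bigl(L_\pi(\cA)\bigr)(I)$ --- and establishing the latter for every graded ideal is essentially the graded-simplicity one is trying to prove, so the step is circular. The standard way around this works directly with the components: writing $I=\bigoplus_g J_g\otimes g$, the ideal condition gives $\cA_{\pi(g')}J_g\subseteq J_{g'g}$ for \emph{every} $g'\in G$, so a word in the multiplication algebra applied to a nonzero $J_{g_0}$ lands in $J_g$ for all $g$ in an entire $H$-coset; combined with $\Mult(\cA)(J_{g_0})=\cA$ (simplicity plus $\cA^2=\cA$) this yields $I=L_\pi(\cA)$ with no Nakayama step at all. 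Relatedly, the Nakayama-type descent you invoke fails for general $R$-submodules because $\FF H$ is not local (the augmentation ideal $\mathfrak m$ itself has zero fiber); it survives only for \emph{graded} submodules, which must be said explicitly. Finally, in part (2) the real content --- that the fiber $\cA=\cB/\mathfrak m\cB$ is simple \emph{and central as an ungraded algebra} --- is exactly where the central-closure machinery of \cite{ABFP} does its work, and your sketch asserts it rather than proves it; the same goes for the claim in part (3) that the transported grading "differs exactly by the cocycle $\sigma$ evaluated through $\chi$," which requires fixing a section $s$ and an explicit computation. So the proposal is a reasonable road map for reproving the cited theorem, but not yet a proof.
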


\bigskip

\section{Graded-central-simple algebras}\label{se:graded-central-simple}

The goal in this section is to remove the restriction on the centroid of a graded-central-simple algebra to be split, at the expense of allowing cocycle twists of loop algebras.

For simplicity, given an abelian group $G$, a subgroup $H$, a $\overline{G}=G/H$-graded algebra algebra $\cA$, and a symmetric $2$-cocycle $\tau\in\Zc^2_\textup{sym}(G,\FF^\times)$, the $\tau$-twist $\bigl(L_\pi(\cA)\bigr)^\tau$ will be denoted by $L_\pi^\tau(\cA)$ and called a \emph{cocycle twisted loop algebra}.

\begin{remark}\label{re:Lpitau}
$L_\pi^\tau(\cA)$ is the subalgebra $\bigoplus_{g\in G}\cA_{\overline{g}}\otimes g$ of the tensor product $\cA\otimes_\FF\FF^\tau G$, where $\FF^\tau G$ is the twisted loop algebra, as in Example \ref{ex:twisted_group_algebra}.
\end{remark}

The next Theorem is the main result of the paper. It reduces the classification of graded-central-simple algebras, up to isomorphism, to the classification of central simple and graded algebras, up to isomorphism.

\pagebreak[2]

\begin{theorem}\label{th:main}
Let $G$ be an abelian group. 
\begin{enumerate}
\item If $\cB$ is a $G$-graded-central-simple algebra, then there is a subgroup $H$ of $G$, a central simple and $\overline{G}=G/H$-graded algebra $\cA$, and a symmetric $2$-cocycle $\tau\in\Zc^2_\textup{sym}(G,\FF^\times)$ such that $\cB\simeq_GL_\pi^\tau(\cA)$. (As usual, $\pi$ denotes the canonical projection $G\rightarrow \overline{G}$.)

\item Conversely, let $H$ be a subgroup of $G$, $\cA$ a central simple and $\overline{G}$-graded algebra, and let $\tau\in\Zc^2_\textup{sym}(G,\FF^\times)$. Then $L_\pi^\tau(\cA)$ is $G$-graded-central-simple and $C\bigl(L_\pi^\tau(\cA)\bigr)\simeq_G\FF^{\tau'}H$, where $\tau'=\tau\vert_{H\times H}$.

\item For $i=1,2$, let $H_i$ be a subgroup of $G$, $\cA_i$ a central simple and $G/H_i$-graded algebra, $\tau_i\in\Zc^2_\textup{sym}(G,\FF^\times)$. Denote by $\pi_i:G\rightarrow\overline{G_i}=G/H_i$ the canonical projection, $i=1,2$. Then $L_{\pi_1}^{\tau_1}(\cA_1)\simeq_GL_{\pi_2}^{\tau_2}(\cA_2)$ if and only if the following conditions are satisfied:
\begin{itemize}
\item $H_1=H_2\defby H$, so $\pi_1=\pi_2\defby \pi:G\rightarrow\overline{G}=G/H$, and 
\item there is a $2$-cocycle $\mu\in\Zc^2_\textup{sym}(\overline{G},\FF^\times)$ such that $[\tau_1]=\pi^*([\mu])[\tau_2]$ in $\Hc^2_\textup{sym}(G,\FF^\times)$ and $\cA_1^\mu\simeq_{\overline{G}}\cA_2$.
\end{itemize}
\end{enumerate}
\end{theorem}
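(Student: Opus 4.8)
The whole proof reduces everything to the split-centroid case of Theorem~\ref{th:ABFP}, with cocycle twists as the device that ``untwists'' a nonsplit centroid; the workhorses are Proposition~\ref{pr:centroid_twists} (giving $C(\cB^\tau)\simeq_G C(\cB)^\tau$), Proposition~\ref{pr:cocycle_twists}(i),(ii),(iv) (composing twists, discarding coboundaries, preserving graded-simplicity), and Proposition~\ref{pr:tau_extension} (extending cocycles from $H$ to $G$). For part (2): $L_\pi(\cA)$ is $G$-graded-central-simple with split centroid $\FF H$ by Theorem~\ref{th:ABFP}(1), so $L_\pi^\tau(\cA)$ is graded-simple by Proposition~\ref{pr:cocycle_twists}(iv) and $C(L_\pi^\tau(\cA))\simeq_G(\FF H)^\tau$; since $\FF H$ is supported on $H$ the twist only sees $\tau'=\tau\vert_{H\times H}$, giving $\FF^{\tau'}H$, whose $e$-component is $\FF 1$, whence graded-central-simplicity and the asserted centroid. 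For part (1): writing $C(\cB)\simeq_G\FF^{\tau'}H$ (coefficients in $\FF$, as $\cB$ is graded-\emph{central}), I extend $\tau'$ to $\tau\in\Zc^2_\textup{sym}(G,\FF^\times)$ by Proposition~\ref{pr:tau_extension}; then $\cB^{\tau^{-1}}$ is graded-simple with $C(\cB^{\tau^{-1}})\simeq_G(\FF^{\tau'}H)^{(\tau')^{-1}}=\FF H$, i.e. split, so Theorem~\ref{th:ABFP}(2) produces a central simple $\overline G$-graded $\cA$ with $\cB^{\tau^{-1}}\simeq_G L_\pi(\cA)$, and twisting this graded isomorphism by $\tau$ (Proposition~\ref{pr:cocycle_twists}(i)) gives $\cB=(\cB^{\tau^{-1}})^\tau\simeq_G L_\pi^\tau(\cA)$.

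Part (3) rests on a formal identity I would record first: for $\mu\in\Zc^2_\textup{sym}(\overline G,\FF^\times)$ the multiplications of $L_\pi(\cA^\mu)$ and of $(L_\pi(\cA))^{\pi^*\mu}$ on $\bigoplus_g\cA_{\bar g}\otimes g$ coincide verbatim, so $L_\pi(\cA^\mu)=L_\pi^{\pi^*\mu}(\cA)$, and $L_\pi$ carries $\overline G$-graded isomorphisms to $G$-graded isomorphisms. The backward direction is then short: from $\cA_1^\mu\simeq_{\overline G}\cA_2$ apply $L_\pi$, rewrite $L_\pi(\cA_1^\mu)=(L_\pi(\cA_1))^{\pi^*\mu}$, twist both sides by $\tau_2$ (Proposition~\ref{pr:cocycle_twists}(i)), and combine $[(\pi^*\mu)\tau_2]=\pi^*([\mu])[\tau_2]=[\tau_1]$ with Proposition~\ref{pr:cocycle_twists}(ii) to conclude $L_\pi^{\tau_1}(\cA_1)\simeq_G L_\pi^{\tau_2}(\cA_2)$.

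For the forward direction, a graded isomorphism $\Phi$ conjugates centroids by a degree-preserving map, so the two centroid supports agree; by part (2) these are $H_1$ and $H_2$, whence $H_1=H_2=H$ and $\pi_1=\pi_2=\pi$. Twisting $\Phi$ by $\tau_2^{-1}$ yields $L_\pi^{\nu}(\cA_1)\simeq_G L_\pi(\cA_2)$ with $\nu=\tau_1\tau_2^{-1}$; the right-hand side has split centroid, so comparing centroids (part (2) and Theorem~\ref{th:ABFP}(1)) forces $\FF^{\nu'}H\simeq_G\FF H$, i.e. $\iota^*([\nu])=[\nu\vert_{H\times H}]=1$, and exactness of \eqref{eq:longH} produces $\mu_0\in\Zc^2_\textup{sym}(\overline G,\FF^\times)$ with $[\nu]=\pi^*([\mu_0])$. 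Hence $L_\pi(\cA_1^{\mu_0})=L_\pi^{\pi^*\mu_0}(\cA_1)\simeq_G L_\pi^{\nu}(\cA_1)\simeq_G L_\pi(\cA_2)$. To enter Theorem~\ref{th:ABFP}(3) I check that $\cA_1^{\mu_0}$ is central simple: by Proposition~\ref{pr:cocycle_twists}(iii) it becomes $\cA_1\otimes_\FF\overline\FF$ after scalar extension, which is simple because $\cA_1$ is central simple, and central simplicity descends. Theorem~\ref{th:ABFP}(3) then gives $\chi\in\Hom(H,\FF^\times)$ with $\cA_2\simeq_{\overline G}(\cA_1^{\mu_0})^{\chi\circ\sigma}=\cA_1^{\mu_0(\chi\circ\sigma)}$; setting $\mu=\mu_0(\chi\circ\sigma)$ gives $\cA_1^\mu\simeq_{\overline G}\cA_2$, and since $[\chi\circ\sigma]=\delta(\chi)\in\ker\pi^*$ (Example~\ref{ex:chi_sigma} and exactness) we keep $\pi^*([\mu])=\pi^*([\mu_0])=[\nu]$, i.e. $[\tau_1]=\pi^*([\mu])[\tau_2]$.

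The main obstacle I anticipate is this last coordination in the forward direction: Theorem~\ref{th:ABFP}(3) pins down $\cA_2$ only up to the twist $\chi\circ\sigma$, and one must absorb that ambiguity into $\mu$ without spoiling the cohomological relation $[\tau_1]=\pi^*([\mu])[\tau_2]$. This works precisely because $\chi\circ\sigma$ represents $\delta(\chi)$, which is annihilated by $\pi^*$ by exactness of \eqref{eq:longH} (the relation $\pi^*\circ\delta=0$); this is the one point where the full structure of the long exact sequence, rather than injectivity or surjectivity of a single map, is genuinely needed. A secondary technical subtlety is the descent argument certifying that the twisted algebra $\cA_1^{\mu_0}$ is again central simple, which is required merely to be allowed to invoke Theorem~\ref{th:ABFP}(3).
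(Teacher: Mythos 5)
Your proposal is correct and follows essentially the same route as the paper's proof: extend the centroid cocycle via Proposition~\ref{pr:tau_extension}, untwist to reach the split-centroid case of Theorem~\ref{th:ABFP}, and in part (3) use the exactness of \eqref{eq:longH} both to produce $\mu_0$ with $[\tau_1\tau_2^{-1}]=\pi^*([\mu_0])$ and to absorb the $\chi\circ\sigma$ ambiguity via $\pi^*\circ\delta=1$. The only (welcome) additions are your explicit statement of the identity $L_\pi(\cA^\mu)=L_\pi^{\pi^*\mu}(\cA)$ and the scalar-extension check that $\cA_1^{\mu_0}$ is again central simple, a hypothesis of Theorem~\ref{th:ABFP}(3) that the paper leaves implicit.
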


\begin{proof}
For (1), if $\cB$ is a $G$-graded-central-simple algebra, and $H$ is the support of its centroid, 
then $C(\cB)$ is a twisted group algebra: $C(\cB)\simeq_G \FF^{\tau'}H$, for a $2$-cocycle 
$\tau'\in\Zc^2_\textup{sym}(H,\FF^\times)$. Proposition \ref{pr:tau_extension} shows that there is 
a $2$-cocycle $\tau\in\Zc^2_\textup{sym}(G,\FF^\times)$ such that $\tau\vert_{H\times H}=\tau'$. 
Then by Proposition \ref{pr:centroid_twists}, $C(\cB^{\tau^{-1}})\simeq_G C(\cB)^{\tau^{-1}}\simeq_G\bigl(\FF^{\tau'}H\bigr)^{\tau^{-1}}=\FF H$, 
so $C(\cB^{\tau^{-1}})$ is split. \cite[Theorem 7.1.1.(iii)]{ABFP} shows that there is a central simple 
$\overline{G}$-graded algebra $\cA$ such that $\cB^{\tau^{-1}}\simeq_G L_\pi(\cA)$, and hence 
$\cB=\bigl(\cB^{\tau^{-1}})^\tau\simeq_G L_\pi^\tau(\cA)$ by Proposition \ref{pr:cocycle_twists}.

Now (2) follows since $\bigl(L_\pi^\tau(\cA)\bigr)^{\tau^{-1}}=L_\pi(\cA)$ is $G$-graded-central-simple, and hence so is $L_\pi^\tau(\cA)$ by Propositions \ref{pr:cocycle_twists} and \ref{pr:centroid_twists}, which also imply the last part of (2).

Finally, if $L_{\pi_1}^{\tau_1}(\cA_1)\simeq_G L_{\pi_2}^{\tau_2}(\cA_2)$, then their centroids are
graded-isomorphic too, and hence with equal supports. Thus $H_1=H_2$ (see Theorem \ref{th:ABFP}.(1)). 
By part (2), $\FF^{\tau_1'}H\simeq_G\FF^{\tau_2'}H$, where $\tau_i'=\tau_i\vert_{H\times H}$, $i=1,2$, 
so that $[\tau_1']=[\tau_2']$ in $\Hc^2_\textup{sym}(H,\FF^\times)$, that is, 
$\iota^*([\tau_1])=\iota^*([\tau_2])$, where $\iota:H\hookrightarrow G$ is the inclusion. By the exactness of \eqref{eq:longH}, there is a $2$-cocycle 
$\nu\in \Zc^2_\textup{sym}(\overline{G},\FF^\times)$ such that $[\tau_1]=\pi^*([\nu])[\tau_2]$. 

 From $L_{\pi}^{\tau_1}(\cA_1)\simeq_G L_{\pi}^{\tau_2}(\cA_2)$ we get $L_{\pi}^{\tau_1\tau_2^{-1}}(\cA_1)\simeq_G L_{\pi}(\cA_2)$. But $[\tau_1\tau_2^{-1}]=\pi^*([\nu])=[\nu\circ(\pi\times\pi)]$, so with $\widehat{\nu}=\nu\circ(\pi\times\pi)$ we get 
 $L_\pi^{\tau_1\tau_2^{-1}}(\cA_1)\simeq_G L_\pi^{\widehat{\nu}}(\cA_1)=L_\pi(\cA_1^\nu)$. 
Hence $L_\pi(\cA_1^\nu)\simeq_G L_\pi(\cA_2)$, and we conclude from \cite[Theorem 7.1.1.(iii)]{ABFP} that there exists a character $\chi\in\Hom(H,\FF^\times)$ such that 
$\bigl(\cA_1^\nu\bigr)_\chi\simeq_{\overline{G}} \cA_2$. Example \ref{ex:chi_sigma} shows that 
$\bigl(\cA_1^\nu\bigr)_\chi=\cA_1^{\nu(\chi\circ\sigma)}$ and 
$[\nu(\chi\circ\sigma)]=[\nu]\delta(\chi)$. With $\mu=\nu(\chi\circ\sigma)$, which lies 
in $\Zc^2_\textup{sym}(\overline{G},\FF^\times)$, we have $\cA_1^\mu\simeq_{\overline{G}}\cA_2$, 
and since $\pi^*([\mu])=\pi^*([\nu])\pi^*\delta(\chi)=\pi^*([\nu])$ by the exactness of \eqref{eq:longH},
we obtain $[\tau_1]=\pi^*([\nu])[\tau_2]=\pi^*([\mu])[\tau_2]$, as required. 
 
The converse is clear, because from $\cA_1^\mu\simeq_{\overline{G}}\cA_2$ we get $L_\pi(\cA_1^\mu)\simeq_G L_\pi(\cA_2)$, so $L_\pi^{\widehat{\mu}}(\cA_1)\simeq_G L_\pi(\cA_2)$, where $\widehat{\mu}=\mu\circ(\pi\times\pi)$. But $[\widehat{\mu}]=\pi^*([\mu])=[\tau_1\tau_2^{-1}]$, so Proposition \ref{pr:cocycle_twists} gives $L_\pi^{\tau_1\tau_2^{-1}}(\cA_1)\simeq_G L_\pi^{\widehat{\mu}}(\cA_1)\simeq_G L_\pi(\cA_2)$, and hence $L_\pi^{\tau_1}(\cA_1)=\bigl(L_\pi^{\tau_1\tau_2^{-1}}(\cA_1)\bigr)^{\tau_2}\simeq_G L_\pi^{\tau_2}(\cA_2)$.
\end{proof}

\begin{remark}\label{re:freedom} 
There is a great freedom in choosing $\tau\in\Zc^2_\textup{sym}(G,\FF^\times)$ in the proof above, the only required condition being that $\tau\vert_{H\times H}$ should be cohomologous to $\tau'$: $\iota^*([\tau])=[\tau']$ in $\Hc^2_\textup{sym}(H,\FF^\times)$.
\end{remark}

We can express the above result in a concise way as follows. Given the abelian group $G$, consider the 
set $\overline{\mathfrak{B}}(G,\FF)$ of the isomorphism classes (as $G$-graded algebras) of 
$G$-graded-central-simple algebras, denoting by $[\cB]$ the class of an algebra $\cB$. Consider too 
the set $\overline{\mathfrak{A}}(G,\FF)$ consisting of the equivalence classes of triples 
$(H,[\tau],\cA)$, where $H$ is a subgroup of $G$, $[\tau]\in\Hc^2_\textup{sym}(G,\FF^\times)$, and 
$\cA$ is a central simple and $G/H$-graded algebra, the equivalence relation being given by
\[
(H_1,[\tau_1],\cA_1)\sim (H_2,[\tau_2],\cA_2)
\]
if $H_1=H_2(\defby H)$, $\iota^*([\tau_1])=\iota^*([\tau_2])$, ($\iota:H\hookrightarrow G$ being the
inclusion map), and if there is a $\mu\in\Zc^2_\textup{sym}(G/H,\FF^\times)$ such that 
$[\tau_1]=\pi^*([\mu])[\tau_2]$ and $\cA_1^\mu\simeq_{G/H}\cA_2$.

\begin{corollary}\label{co:main}
The map
\[
\begin{split}
\overline{\mathfrak{A}}(G,\FF)&\longrightarrow \overline{\mathfrak{B}}(G,\FF)\\
[(H,[\tau],\cA)]\ &\mapsto\ [L_\pi^\tau(\cA)]
\end{split}
\]
is a bijection.
\end{corollary}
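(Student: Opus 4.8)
The plan is to read off all three requirements---well-definedness, surjectivity, and injectivity---directly from the three parts of Theorem \ref{th:main}, since the equivalence relation $\sim$ on triples in the definition of $\overline{\mathfrak{A}}(G,\FF)$ was set up precisely to match the graded-isomorphism criterion in Theorem \ref{th:main}(3). No genuinely new content is needed; the work is entirely in organizing the application of the theorem.

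First I would check that the assignment $[(H,[\tau],\cA)]\mapsto[L_\pi^\tau(\cA)]$ is well defined and indeed lands in $\overline{\mathfrak{B}}(G,\FF)$. That $L_\pi^\tau(\cA)$ is $G$-graded-central-simple, hence represents a class in $\overline{\mathfrak{B}}(G,\FF)$, is exactly Theorem \ref{th:main}(2). For independence of the chosen representative $\tau$ of $[\tau]\in\Hc^2_\textup{sym}(G,\FF^\times)$, Proposition \ref{pr:cocycle_twists}(ii) gives $\bigl(L_\pi(\cA)\bigr)^{\tau_1}\simeq_G\bigl(L_\pi(\cA)\bigr)^{\tau_2}$ whenever $[\tau_1]=[\tau_2]$, so $[L_\pi^\tau(\cA)]$ depends only on $[\tau]$. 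Finally, independence of the $\sim$-class of the triple is the ``if'' direction of Theorem \ref{th:main}(3): if $(H_1,[\tau_1],\cA_1)\sim(H_2,[\tau_2],\cA_2)$, then by the definition of $\sim$ we have $H_1=H_2$, $\iota^*([\tau_1])=\iota^*([\tau_2])$, and there is a $\mu\in\Zc^2_\textup{sym}(G/H,\FF^\times)$ with $[\tau_1]=\pi^*([\mu])[\tau_2]$ and $\cA_1^\mu\simeq_{G/H}\cA_2$, which are precisely the conditions forcing $L_{\pi_1}^{\tau_1}(\cA_1)\simeq_G L_{\pi_2}^{\tau_2}(\cA_2)$. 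Hence the two triples map to the same class.

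Surjectivity is then immediate from Theorem \ref{th:main}(1): any $G$-graded-central-simple $\cB$ satisfies $\cB\simeq_G L_\pi^\tau(\cA)$ for a suitable $H$, central simple $G/H$-graded $\cA$, and $\tau\in\Zc^2_\textup{sym}(G,\FF^\times)$, so $[\cB]$ is the image of $[(H,[\tau],\cA)]$. Injectivity is the ``only if'' direction of Theorem \ref{th:main}(3): if $[L_{\pi_1}^{\tau_1}(\cA_1)]=[L_{\pi_2}^{\tau_2}(\cA_2)]$, i.e.\ the two cocycle twisted loop algebras are graded-isomorphic, then $H_1=H_2$ and there is a $\mu$ as above, which is verbatim the statement that $(H_1,[\tau_1],\cA_1)\sim(H_2,[\tau_2],\cA_2)$; thus the two triples represent the same point of $\overline{\mathfrak{A}}(G,\FF)$. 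The only point requiring any care is the bookkeeping of verifying that $\sim$ is genuinely an equivalence relation on triples---symmetry and transitivity follow once one uses that $\pi^*$ is a group homomorphism on $\Hc^2_\textup{sym}$ and that the twists compose via Proposition \ref{pr:cocycle_twists}(i)---but this is routine, and I expect no real obstacle.
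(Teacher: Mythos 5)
Your proposal is correct and matches the paper's (implicit) argument exactly: the paper states Corollary \ref{co:main} without a separate proof, treating it as an immediate repackaging of Theorem \ref{th:main}, with part (2) giving well-definedness of the target, part (1) giving surjectivity, and part (3) giving both independence of the representative triple and injectivity. Your additional checks (independence of the cocycle representative via Proposition \ref{pr:cocycle_twists}(ii), and that $\sim$ is an equivalence relation) are the right routine verifications and present no obstacle.
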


The inverse is given by $[\cB]\mapsto [(H,[\tau],\cA)]$, where $H$ is the support of $C(\cB)$, $[\tau]\in\Hc^2_\textup{sym}(G,\FF^\times)$ is such that $\iota^*([\tau])\in\Hc^2_\textup{sym}(H,\FF^\times)$ determines the isomorphism class of $C(\cB)$ as an $H$-graded algebra, and $\cA$ is a central image (see \cite[\S 6]{ABFP}) of $\cB^{\tau^{-1}}$ (whose centroid is split).

\smallskip

If we want to get rid of the freedom in choosing $\tau\in\Zc^2_\textup{sym}(G,\FF^\times)$ mentioned in Remark \ref{re:freedom}, we may fix, for all subgroups $H$ of $G$, a section $\xi_H:\Hc^2_\textup{sym}(H,\FF^\times)\rightarrow \Hc^2_\textup{sym}(G,\FF^\times)$ of the map $\iota^*$ in \eqref{eq:longH}. Then we may consider the set $\overline{\mathfrak{A}}'(G,\FF)$  of triples $(H,[\tau'],[\cA])$, where $H$ is a subgroup of $G$, $[\tau']\in\Hc^2_\textup{sym}(H,\FF^\times)$, and 
$[\cA]$ is the equivalence class of a central simple and $G/H$-graded algebra $\cA$, under the equivalence relation being given by
$
\cA_1\sim \cA_2
$
if there is a character $\chi\in\Hom(H,\FF^\times)$ such that 
 $\bigl(\cA_1\bigr)_\chi\simeq_{G/H}\cA_2$.
 
\begin{corollary}\label{co:less_freedom}
The map
\[
\begin{split}
\overline{\mathfrak{A}}'(G,\FF)&\longrightarrow \overline{\mathfrak{B}}(G,\FF)\\
(H,[\tau'],[\cA])\ &\mapsto\ [L_\pi^\tau(\cA)]
\end{split}
\]
where $\tau$ is any $2$-cocycle in $\Zc^2_\textup{sym}(G,\FF^\times)$ such that $[\tau]=\xi_H([\tau'])$, is a bijection.
\end{corollary}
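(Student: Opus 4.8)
The plan is to deduce this from Corollary \ref{co:main} by comparing the two index sets. Writing $\Psi:\overline{\mathfrak{A}}(G,\FF)\to\overline{\mathfrak{B}}(G,\FF)$ for the bijection of Corollary \ref{co:main}, I would introduce the auxiliary map
\[
\Phi:\overline{\mathfrak{A}}'(G,\FF)\longrightarrow\overline{\mathfrak{A}}(G,\FF),\qquad
(H,[\tau'],[\cA])\longmapsto\bigl[(H,\xi_H([\tau']),\cA)\bigr],
\]
and observe that the map in the statement is exactly the composite $\Psi\circ\Phi$: indeed $\Psi$ sends $[(H,\xi_H([\tau']),\cA)]$ to $[L_\pi^\tau(\cA)]$ with $[\tau]=\xi_H([\tau'])$, which is the prescribed assignment. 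Thus the whole statement reduces to showing that $\Phi$ is a well-defined bijection, after which being a composite of bijections finishes the proof (this also disposes of the independence of the choice of $\tau$ in the class $\xi_H([\tau'])$, which is Proposition \ref{pr:cocycle_twists}(ii)).

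For well-definedness of $\Phi$ I would check that it respects the equivalence on the third slot. If $(\cA_1)_\chi\simeq_{G/H}\cA_2$ for some $\chi\in\Hom(H,\FF^\times)$, then by Example \ref{ex:chi_sigma} one has $\cA_1^{\chi\circ\sigma}=(\cA_1)_\chi\simeq_{G/H}\cA_2$ and $[\chi\circ\sigma]=\delta(\chi)$, so $\pi^*([\chi\circ\sigma])=\pi^*\delta(\chi)$ is trivial by exactness of \eqref{eq:longH}. Taking $\mu=\chi\circ\sigma$ then witnesses $(H,\xi_H([\tau']),\cA_1)\sim(H,\xi_H([\tau']),\cA_2)$ in $\overline{\mathfrak{A}}(G,\FF)$. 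Injectivity runs along the same lines: if the images are equivalent then $H_1=H_2=:H$ and $\iota^*\xi_H([\tau_1'])=\iota^*\xi_H([\tau_2'])$, whence $[\tau_1']=[\tau_2']$ because $\xi_H$ is a section of $\iota^*$; the cocycle $\mu$ appearing in the equivalence then satisfies $\pi^*([\mu])=1$, so $[\mu]\in\ker\pi^*=\im\delta$ gives $[\mu]=\delta(\chi)=[\chi\circ\sigma]$, and Proposition \ref{pr:cocycle_twists}(ii) yields $\cA_1^\mu\simeq_{G/H}(\cA_1)_\chi$; combined with $\cA_1^\mu\simeq_{G/H}\cA_2$ this gives $\cA_1\sim\cA_2$, i.e. $[\cA_1]=[\cA_2]$.

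The surjectivity of $\Phi$ is the step I expect to be the crux. Given a class $[(H,[\tau],\cA)]\in\overline{\mathfrak{A}}(G,\FF)$, I would set $[\tau']:=\iota^*([\tau])\in\Hc^2_\textup{sym}(H,\FF^\times)$ and consider the correcting class $c:=\xi_H([\tau'])[\tau]^{-1}\in\Hc^2_\textup{sym}(G,\FF^\times)$. Since $\iota^*\xi_H=\id$, one computes $\iota^*(c)=1$, so by the exactness $\ker\iota^*=\im\pi^*$ of \eqref{eq:longH} there is $[\nu]\in\Hc^2_\textup{sym}(G/H,\FF^\times)$ with $\pi^*([\nu])=c$, that is, $\xi_H([\tau'])=\pi^*([\nu])[\tau]$. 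Putting $\cA'':=\cA^{\nu^{-1}}$, which is again central simple and $G/H$-graded, one has $(\cA'')^\nu=\cA$ by Proposition \ref{pr:cocycle_twists}(i), so $\mu=\nu$ witnesses $(H,\xi_H([\tau']),\cA'')\sim(H,[\tau],\cA)$, giving $\Phi(H,[\tau'],[\cA''])=[(H,[\tau],\cA)]$. The only delicate point is the assertion that the twist $\cA^{\nu^{-1}}$ remains central simple; I would justify it exactly as in Proposition \ref{pr:cocycle_twists}(iii), namely $\cA^{\nu^{-1}}\otimes_\FF\overline{\FF}\simeq_{G/H}\cA\otimes_\FF\overline{\FF}$ is central simple over $\overline{\FF}$, and central simplicity descends along the field extension $\FF\subseteq\overline{\FF}$.
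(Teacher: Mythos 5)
Your argument is correct, but it is organized differently from the paper's. The paper proves the corollary directly: surjectivity is immediate from Theorem \ref{th:main}(1) together with the freedom noted in Remark \ref{re:freedom} (one may always choose the representative $\tau$ so that $[\tau]=\xi_H([\tau'])$), and for injectivity it observes that equal images force $H_1=H_2$ and $[\tau_1']=[\tau_2']$ (both are read off from the centroid of the common image), hence the \emph{same} twisting class $\xi_H([\tau_1'])=\xi_H([\tau_2'])$; untwisting then gives $L_\pi(\cA_1)\simeq_G L_\pi(\cA_2)$ and \cite[Theorem 7.1.1.(iii)]{ABFP} finishes at once. You instead factor the map through $\overline{\mathfrak{A}}(G,\FF)$ and prove that your $\Phi$ is a bijection; this requires the extra cohomological bookkeeping in your injectivity step (using $\ker\pi^*=\im\delta$ and $\delta(\chi)=[\chi\circ\sigma]$ to recover the character) and, above all, the surjectivity-of-$\Phi$ step with the correcting class $c=\xi_H([\tau'])[\tau]^{-1}$ and the twist $\cA^{\nu^{-1}}$, none of which the paper needs. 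What your route buys is a slightly stronger structural statement: an explicit bijection $\overline{\mathfrak{A}}'(G,\FF)\simeq\overline{\mathfrak{A}}(G,\FF)$ compatible with the two maps to $\overline{\mathfrak{B}}(G,\FF)$, which makes transparent that the residual freedom in Corollary \ref{co:main} is exactly a twist by a cocycle inflated from $\overline{G}$. Your justification that $\cA^{\nu^{-1}}$ remains central simple (scalar extension to $\overline{\FF}$ via Proposition \ref{pr:cocycle_twists}(iii), then descent) is sound, and the same fact is used implicitly in the paper's proof of Theorem \ref{th:main}(3) when $\cA_1^\nu$ is fed into the loop construction.
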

\begin{proof}
The map is clearly surjective. If two pairs $(H_1,[\tau_1'],[\cA_1])$ and $(H_2,[\tau_2'],[\cA_2])$ have the same image $[\cB]$, then the support of $C(\cB)$ is $H_1=H_2(\defby H)$, and its isomorphism class (as $H$-graded algebra) is determined by both $[\tau_1']$ and $[\tau_2']$ in $\Hc^2_\textup{sym}(H,\FF^\times)$. Hence $(H_1,[\tau_1'])=(H_2,[\tau_2'])$. Then $L_\pi^\tau(\cA_1)\simeq_G L_\pi^\tau(\cA_2)$, with $[\tau]=\xi_H([\tau_1'])=\xi_H([\tau_2'])$, which implies $L_\pi(\cA_1)\simeq_G L_\pi(\cA_2)$, and hence $\bigl(\cA_1)_\chi\simeq_{G/H}\cA_2$ for some $\chi\in\Hom(H,\FF^\times)$ by \cite[Theorem 7.1.1.(iii)]{ABFP}. Thus $(H_1,[\tau_1'],[\cA_1])=(H_2,[\tau_2'],[\cA_2])$.
\end{proof}

\bigskip

\section{The real case}\label{se:real}

If the ground field is the field of real numbers $\RR$, the $G$-graded-central-simple real algebras have been considered in \cite{BKpr}. The approach there is by Galois descent from $\CC$ to $\RR$. Hence any such algebra is obtained, up to isomorphism, by Galois descent from a complex loop algebra, and checked to be isomorphic to a \emph{loop algebra twisted by a character} $\chi:G\rightarrow S^1$, where $S^1$ denotes the unit circle in $\CC$.

Let us finish this section by showing the connection of these ``$\chi$-twisted loop algebras'' over $\RR$ with the cocycle twisted loop algebras considered here.

Given an abelian group $G$, the short exact sequence
\[
\begin{tikzcd}
\xi:\quad 1\arrow[r]&\RR^\times\arrow[r, hookrightarrow]&\CC^\times\arrow[r, "p"]&S^1\arrow[r]&1\\[-20pt]
&&z\arrow[r, mapsto] &z/\bar z&
\end{tikzcd}
\]
induces a long exact sequence
\begin{equation}\label{eq:longS1}
\begin{tikzcd}[column sep=small]
1\arrow[r, rightarrow]&\Hom(G,\RR^\times)\arrow[r]&\Hom(G,\CC^\times)\arrow[r]&\Hom(G,S^1)\arrow[r, "\delta"]
&\Ext(G,\RR^\times)\arrow[r]&1
\end{tikzcd}
\end{equation}
as $\Ext(G,\CC^\times)=1$ because $\CC^\times$ is divisible.

\begin{lemma}\label{le:real}
For any $\tau\in\Zc^2_\textup{sym}(G,\RR^\times)$ there is a character $\chi\in\Hom(G,S^1)$ such that, for any choice of elements $z_g\in\CC$ with $z_g^2=\chi(g)$ for all $g$, $[\tau]=[\dc \gamma]$ in $\Hc^2_\textup{sym}(G,\RR^\times)$, where $\gamma$ is the map $G\rightarrow S^1$, $g\mapsto z_g$. 
\end{lemma}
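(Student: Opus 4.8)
The plan is to recognize the class $[\dc\gamma]$ as the image of $\chi$ under the connecting homomorphism $\delta\colon\Hom(G,S^1)\to\Ext(G,\RR^\times)\simeq\Hc^2_\textup{sym}(G,\RR^\times)$ appearing in the long exact sequence \eqref{eq:longS1}, and then to produce the required $\chi$ from the surjectivity of $\delta$. Once the statement is phrased this way, the content reduces to one explicit cocycle computation together with a well-definedness check.

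First I would make the key computation explicit. Since $\chi(g)\in S^1$, any $z_g\in\CC$ with $z_g^2=\chi(g)$ automatically satisfies $|z_g|=1$, so $z_g\in S^1$ and hence $\bar z_g=z_g^{-1}$. Therefore $p(z_g)=z_g/\bar z_g=z_g^2=\chi(g)$; that is, $\gamma\colon g\mapsto z_g$ is a set-theoretic lift of $\chi$ along $p$. The connecting homomorphism $\delta(\chi)$ is computed precisely from such a lift: choosing the section $g\mapsto z_g$ of $p$, the associated cocycle is $(g_1,g_2)\mapsto z_{g_1}z_{g_2}z_{g_1g_2}^{-1}$, which lies in $\ker p=\RR^\times$. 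In fact its square equals $\chi(g_1)\chi(g_2)\chi(g_1g_2)^{-1}=1$ since $\chi$ is a homomorphism, so it takes values in $\{\pm1\}$. This cocycle is exactly $\dc\gamma$, whence $\delta(\chi)=[\dc\gamma]$ in $\Hc^2_\textup{sym}(G,\RR^\times)$.

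Next, the exactness of \eqref{eq:longS1}---which holds because $\Ext(G,\CC^\times)=1$, as $\CC^\times$ is divisible---shows that $\delta$ is surjective onto $\Ext(G,\RR^\times)\simeq\Hc^2_\textup{sym}(G,\RR^\times)$. Hence, given $[\tau]$, I may choose $\chi\in\Hom(G,S^1)$ with $\delta(\chi)=[\tau]$; combining this with the previous paragraph yields $[\tau]=[\dc\gamma]$. To see that the conclusion holds for \emph{any} choice of square roots, as stated, I would note that two choices $z_g$ and $z_g'$ with $z_g^2=(z_g')^2=\chi(g)$ differ by a map $G\to\{\pm1\}\subseteq\RR^\times$; this multiplies $\gamma$ by an $\RR^\times$-valued function and hence changes $\dc\gamma$ only by an element of $\Bc^2(G,\RR^\times)$, leaving the class $[\dc\gamma]$ unchanged.

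The main point to get right is the identification of the explicit cocycle representing $\delta(\chi)$ with $\dc\gamma$, i.e.\ the verification that a square root of $\chi(g)$ is the correct lift of $\chi(g)$ along $p(z)=z/\bar z$. This is where the specific form of $p$ in \eqref{eq:longS1} is used, and everything else (surjectivity of $\delta$ and the independence of the choice of square roots) is then routine.
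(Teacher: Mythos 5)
Your argument is correct and follows essentially the same route as the paper: both identify $[\dc\gamma]$ with $\delta(\chi)$ by computing the connecting homomorphism from the section $g\mapsto(z_g,g)$ of the pull-back extension $\xi\chi$ (your ``lift of $\chi$ along $p$'' is exactly that section), and both conclude by the surjectivity of $\delta$ coming from $\Ext(G,\CC^\times)=1$. Your explicit checks that $|z_g|=1$ forces $p(z_g)=z_g^2=\chi(g)$, and that the class $[\dc\gamma]$ is independent of the choice of square roots, are details the paper leaves implicit but are welcome.
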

Note that $\dc\gamma(g_1,g_2)^2=\chi(g_1)\chi(g_2)\chi(g_1g_2)^{-1}=1$, so $\dc\gamma(g_1,g_2)\in\{\pm 1\}\subseteq \RR^\times$ and hence $\dc\gamma\in\Zc^2_\textup{sym}(G,\RR^\times)$, even though $\gamma$ takes values in $S^1\subseteq\CC^\times$.
\begin{proof}
The connecting homomorphism $\delta$ in \eqref{eq:longS1} works as follows. Given any character $\chi:G\rightarrow S^1$, let $E$ be the pull-back of $\chi$ and $p$:
\[
E=\{(z,g)\in\CC^\times\times G\mid \chi(g)=z/\bar z\},
\]
with its natural projections $\pi_1:E\rightarrow \CC^\times$ and $\pi_2:E\rightarrow G$. There appears the commutative diagram with exact rows:
\[
\begin{tikzcd}
\xi\chi:\quad 1\arrow[r]&\RR^\times\arrow[r]\arrow[d, equal]&E\arrow[r, "\pi_2"] \arrow[d, "\pi_1"]&G\arrow[r]\arrow[d, "\chi"]&1\\
\ \xi:\quad 1\arrow[r]&\RR^\times\arrow[r, hookrightarrow]&\CC^\times\arrow[r, "p"]&S^1\arrow[r]&1
\end{tikzcd}
\]
Then $\delta(\chi)$ is the class in $\Ext(G,\RR^\times)$ of $\xi\chi$. For any choice of elements $z_g\in \CC$ with $z_g^2=\chi(g)$, the map $G\rightarrow E$, $g\mapsto (z_g,g)$, is a section of $\pi_2$, and hence the element in $\Hc^2_\textup{sym}(G,\RR^\times)$ corresponding to the extension $\xi\chi$ is the class of the cocycle in $\Zc^2_\textup{sym}(G,\RR^\times)$ given by
\[
(g_1,g_2)\mapsto z_{g_1}z_{g_2}z_{g_1g_2}^{-1}\,.
\]
That is, identifying $\Ext(G,\RR^\times)$ with $\Hc^2_\textup{sym}(G,\RR^\times)$, $\delta(\chi)=[\dc\gamma]$ with $\gamma:g\mapsto z_g$. The Lemma now follows from the surjectivity of $\delta$.
\end{proof}

Now, given an abelian group $G$, a subgroup $H$, and a central simple and $G/H$-graded real algebra $\cA$, and given a character $\chi:G\rightarrow S^1$, the algebra $L_\pi^\chi(\cA)$ defined in \cite{BKpr} is the subalgebra $\bigoplus_{g\in G}\bigl(\cA_{\overline{g}}\otimes g\bigr)$ of $\cA\otimes_\RR \RR^{\dc\gamma}G$ ($\gamma$ as in Lemma \ref{le:real}), and hence it is precisely the $\dc\gamma$-twisted loop algebra $L_\pi^{\dc\gamma}(\cA)$ (see Remark \ref{re:Lpitau}). Lemma \ref{le:real} and Proposition \ref{pr:cocycle_twists} show then that for any $\tau\in\Zc^2_\textup{sym}(G,\RR^\times)$, there is a character $\chi:G\rightarrow S^1$ such that $L_\pi^\tau(\cA)\simeq_G L_\pi^\chi(\cA)$.

\bigskip

\begin{remark}\label{re:noncentral_complex}
Let $\cB$ be a central simple complex algebra, and denote by $\cB_\RR$ the real algebra obtained by restriction of scalars. That is, $\cB_\RR$ is just $\cB$, but considered as a real algebra. 
The results in this paper reduce the study of gradings on $\cB_\RR$ to gradings on the complex algebra $\cB$ and gradings on real forms of $\cB$, i.e., central simple real algebras $\cA$ such that $\cB$ is isomorphic to $\cA\otimes_\RR\CC$, as follows.

Let $G$ be an abelian group and let $\Gamma:\cB_\RR=\bigoplus_{g\in G}(\cB_\RR)_g$ be a $G$-grading on $\cB_\RR$. The centroid $C(\cB)=C(\cB_\RR)$ is $\CC 1$ ($1$ denotes here the identity map). Then either:

\begin{enumerate} 
\item The grading induced by $\Gamma$ on $C(\cB)$ is trivial: $C(\cB)=C(\cB)_e$. This means that each $(\cB_\RR)_g$ is a complex subspace of $\cB$, so $\Gamma$ is actually a $G$-grading of the complex algebra $\cB$: $\Gamma: \cB=\bigoplus_{g\in G}\cB_g$. 

Each isomorphism class of gradings $(\cB_\RR,\Gamma)$ corresponds to one or two isomorphism classes of gradings $(\cB,\Gamma)$, because the index $[\Aut(\cB_\RR):\Aut(\cB)]$ may be $1$ or $2$.

\smallskip

\item The grading induced by $\Gamma$ on $C(\cB)$ is not trivial. Then there is an element $h\in G$ of order $2$ such that $C(\cB)_e=\RR 1$, $C(\cB)_h=\RR\bi$ (identifying here $\bi$ with the map $x\mapsto \bi x$). Then $\cB_\RR$ is $G$-graded-central-simple and Theorem \ref{th:main} (and Corollaries \ref{co:main} and \ref{co:less_freedom}) applies, so that, with $H=\langle h\rangle$, we have an isomorphism $\cB_\RR\simeq_G L_\pi^\tau(\cA)$ for a central simple and $\overline{G}$-graded real algebra $\cA$ and a symmetric $2$-cocycle $\tau\in \Hc_\textrm{sym}(G,\RR^\times)$. Since $\CC^\times$ is divisible $\Hc^2_\textrm{sym}(G,\CC^\times)\simeq\Ext(G,\CC^\times)$ is trivial, so there is a map $\gamma:G\rightarrow \CC^\times$ such that $\tau=\dc\gamma$. Then the map:
\[
\Phi:L_\pi^\tau(\cA)\longrightarrow \cA\otimes_\RR\CC,\quad 
  x\otimes g\mapsto x\otimes \gamma(g)
\]
for $g\in G$ and $x\in \cA_{\overline{g}}$, is easily seen to be an isomorphism of $\overline{G}$-graded real algebras. Thus $\cB_\RR$ is isomorphic to $\cA\otimes_\RR\CC$ as real algebras, and composing, if necessary, with $\id\otimes(\text{complex conjugation})$, we check that $\cB$ and $\cA\otimes_\RR\CC$ are isomorphic complex algebras, so that $\cA$ is a real form of $\cB$.
\end{enumerate}
\end{remark}

\bigskip



\begin{thebibliography}{KMRT98}



\bibitem[ABFP08]{ABFP}
B.~Allison, S.~Berman, J.~Faulkner, and A.~Pianzola, \emph{Realization of 
graded-simple algebras as loop algebras}, Forum Math. \textbf{20} (2008), no.~3, 395--432.

\bibitem[BKpr]{BKpr}
Y.~Bahturin and M.~Kochetov, \emph{On nonassociative graded-simple algebras over the field of real numbers}, preprint arXiv:1807.00057.


\bibitem[BN06]{BenkartNeher}
G.~Benkart and E.~Neher, \emph{The centroid of extended affine and root graded Lie algebras},
J.~Pure Appl. Algebra \textbf{205} (2006), no.~1, 117--145.



\bibitem[EK13]{EKmon} 
A.~Elduque and M.~Kochetov, \emph{Gradings on simple {L}ie algebras}, Mathematical Surveys and Monographs, vol.~189,  American Mathematical Society, Providence, RI, 2013.


\bibitem[HS97]{Hilton-Stammbach}
P.J.~Hilton and U.~Stammbach, 
\emph{A course in homological algebra} 
Second edition. Graduate Texts in Mathematics \textbf{4}. Springer-Verlag, New York, 1997.

\bibitem[IW53]{Inonu-Wigner}
E.~Inonu and E.P.~ Wigner, \emph{On the contraction of groups and their representations}, 
Proc. Nat. Acad. Sci. U.S.A. \textbf{39} (1953), 510--524. 

\bibitem[dMP91]{MontignyPatera}
M.~de Montigny and J.~Patera, \emph{Discrete and continuous graded contractions of Lie algebras and superalgebras} 
J.~Phys. A \textbf{24} (1991), no.~3, 525--547. 

\bibitem[MP91]{MoodyPatera}
R.V.~Moody and J.~Patera, \emph{Discrete and continuous graded contractions of representations of Lie algebras}, J.~Phys. A \textbf{24} (1991), no.~10, 2227--2257.

\bibitem[Wei94]{Weibel}
C.A.~Weibel, \emph{An introduction to homological algebra},
   Cambridge Studies in Advanced Mathematics \textbf{38},
   Cambridge University Press, Cambridge, 1994.
 











\end{thebibliography}
\end{document}